\def\NN{\ensuremath{\mathbb{N}}}
\def\RR{\ensuremath{\mathbb{R}}}
\newcommand{\CC}{{\mathbb C}}
\newcommand{\V}{{\mathcal V}}
\newcommand{\B}{{\mathcal B}}
\def\I{\ensuremath{\mathcal{I}}}
\def\T{\ensuremath{\mathcal{T}}}
\def\cc{\ensuremath{{\bf{c}}}}
\def\ff{\ensuremath{{\bf{f}}}}
\def\pp{\ensuremath{{\bf{p}}}}
\def\qq{\ensuremath{{\bf{q}}}}
\def\ss{\ensuremath{{\bf{s}}}}
\def\xx{\ensuremath{{\bf{x}}}}
\def\yy{\ensuremath{{\bf{y}}}}
\def\TH{\ensuremath{\textup{TH}}}
\def\Aut{\ensuremath{\textup{Aut}}}
\def\conv{\ensuremath{\textup{conv}}}
\def\cl{\ensuremath{\textup{cl}}}
\def\deg{\ensuremath{\textup{deg}}}
\def\mod{\ensuremath{\textup{mod}}}
\begin{document}

\title*{Convex Hulls of Algebraic Sets} 
\author{Jo{\~a}o Gouveia\inst{1}\and Rekha Thomas\inst{2}}
\institute{Department of Mathematics, University of Washington, Box
 354350, Seattle, WA 98195, USA, and CMUC, Department of Mathematics,
 University of Coimbra, 3001-454 Coimbra, Portugal
   \texttt{jgouveia@math.washington.edu}\and
Department of Mathematics, University of Washington, Box  354350, Seattle, WA 98195, USA \texttt{thomas@math.washington.edu}}
 
%\date{\today}

\maketitle

\begin{abstract}
 This article describes a method to compute successive convex approximations of the convex hull of a set of points in $\RR^n$ that are the solutions to a system of polynomial equations over the reals. The method relies on sums of squares of polynomials and the dual theory of moment matrices. The main feature of the technique is that all computations are done modulo the ideal generated by the polynomials defining the set to the convexified. This work was motivated by questions raised by Lov{\'a}sz concerning extensions of the theta body of a graph to arbitrary real algebraic varieties, and hence the relaxations described here are called theta bodies. The convexification process can be seen as an incarnation of Lasserre's hierarchy of convex relaxations of a semialgebraic set in $\RR^n$. 
When the defining ideal is real radical the results become especially nice. We provide several examples of the method and discuss convergence issues. Finite convergence, especially after the first step of the method, can be described explicitly for finite point sets. 
\end{abstract}

%%%%%%%%%%%%%%%%%%%%%%%%%%%%%%%%%%%%%%%%%%%%%%%%%%%%%%%%%%%%%%%%%%%%%
\section{Introduction} 

An important concern in optimization is the complete or partial knowledge of the convex hull of the set of feasible solutions to an optimization problem.  Computing convex hulls is in general a difficult task, and a classical example is the construction of the integer hull of a polyhedron which drives many algorithms in integer programming. In this article we describe a method to convexify (at least approximately), an algebraic set using semidefinite programming. 

By an algebraic set we mean a subset $S \subseteq \RR^n$ described by a finite list of polynomial equations of the form $f_1(\xx) = f_2(\xx) = \ldots = f_t(\xx) = 0$ where $f_i(\xx)$ is an element of $\RR[\xx] := \RR[x_1, \ldots, x_n]$, the polynomial ring in $n$ variables over the reals. The input to our algorithm is the ideal generated by $f_1, \ldots, f_t$, denoted as $I = \langle f_1, \ldots, f_t \rangle$, which is the set $\{ \sum_{i=1}^{t} g_i f_i \,:\, g_i \in \RR[\xx] \}$. An ideal $I \subseteq \RR[\xx]$ is a group under addition and is closed under multiplication by elements of $\RR[\xx]$. Given an ideal $I \subseteq \RR[\xx]$, its \emph{real variety}, $\V_\RR(I) = \{ \xx \in \RR^n \,:\, f(\xx) = 0 \,\,\forall \,\,f \in I \}$ is an example of an algebraic set. Given $I$, we describe a method to produce a nested sequence of convex relaxations of the closure of $\conv(\V_\RR(I))$, the convex hull of $\V_\RR(I)$, called the \emph{theta bodies} of $I$. The $k$-th theta body $\TH_k(I)$ is obtained as the projection of a \emph{spectrahedron} 
(the feasible region of a semidefinite program), and
$$\TH_1(I) \supseteq \TH_2(I) \supseteq \cdots \supseteq \TH_k(I) \supseteq \TH_{k+1}(I) \supseteq \cdots \supseteq \textup{cl}(\conv(\V_\RR(I))).$$
Of special interest to us are \emph{real radical ideals}. We define the real radical of an ideal $I$, denoted as $\sqrt[\RR]{I}$, to be the set of all polynomials $f \in \RR[\xx]$ such that $f^{2m} + \sum g_i^2 \in I$ for some $g_i \in \RR[\xx]$ and $m \in \NN$. We say that $I$ is a real radical ideal if $I = \sqrt[\RR]{I}$. Given a set $S \subseteq \RR^n$, the \emph{vanishing ideal} of $S$, denoted as $\I(S)$, is the set of all polynomials in $\RR[\xx]$ that vanish on $S$. The \emph{Real Nullstellensatz} (see Theorem~\ref{thm:real_nullstellensatz}) says that for any ideal $I$, $\I(\V_\RR(I)) = \sqrt[\RR]{I}$.

The construction of theta bodies for arbitrary ideals was motivated by a problem posed by Lov{\'a}sz. In \cite{ShannonCapacity} Lov{\'a}sz constructed the theta body of a graph, a convex relaxation of the stable set polytope of a graph which was shown later to have a description in terms of semidefinite programming. An important result in this context is that the theta body of a graph coincides with the stable set polytope of the graph if and only if the graph is perfect. Lov{\'a}sz observed that the theta body of a graph could be described in terms of sums of squares of real polynomials modulo the ideal of polynomials that vanish on the incidence vectors of stable sets. This observation naturally suggests the definition of a theta body for any ideal in $\RR[\xx]$. In fact, an easy extension of his observation leads to a hierarchy of theta bodies for all ideals as above. In \cite[Problem 8.3]{Lovasz}, Lov{\'a}sz asked to characterize all ideals that have the property that their first theta body coincides with $\textup{cl}(\conv(\V_\RR(I)))$, which was the starting point of our work. For defining ideals of finite point sets we answer this question in Section~4.

This article is organized as follows. In Section 2 we define theta bodies of an ideal in $\RR[\xx]$ in terms of sums of squares polynomials. For a general ideal $I$, we get that $\TH_k(I)$ contains the closure of the projection of a spectrahedron which is described via combinatorial moment matrices from $I$. When the ideal $I$ is real radical, we show that $\TH_k(I)$ coincides with the closure of the projected spectrahedron, and when $I$ is the defining ideal of a set of points in $\{0,1\}^n$, the closure is not needed. We establish a general relationship between the theta body sequence of an ideal $I$ and that of its real radical ideal $\sqrt[\RR]{I}$.

Section 3 gives two examples of the construction described in Section 2. As our first example, we look at the stable sets in a graph and describe the hierarchy of theta bodies that result. The first member of the hierarchy is Lov{\'a}sz's theta body of a graph. This hierarchy converges to the stable set polytope in finitely many steps as is always the case when we start with a finite set of real points. The second example is a cardiod in the plane in which case the algebraic set that is being convexified is infinite. 

In Section 4 we discuss convergence issues for the theta body sequence. When $\V_\RR(I)$ is compact, the theta body sequence is guaranteed to converge to the closure of $\conv(\V_\RR(I))$ asymptotically. We prove that when $\V_\RR(I)$ is finite, $\TH_k(I) = \textup{cl}(\conv(\V_\RR(I)))$ for some finite $k$. In the case of finite convergence, it is useful to know the specific value of $k$ for which $\TH_k(I) = \textup{cl}(\conv(\V_\RR(I)))$. This notion is called exactness and we characterize exactness for the first theta body when the set to be convexified is finite.  There are examples in which the theta body sequence does not converge to  $\textup{cl}(\conv(\V_\RR(I)))$. While a full understanding of when convergence occurs is still elusive, we describe one obstruction to finite convergence in terms of certain types of singularities of $\V_\RR(I)$. 

The last section gives more examples of theta bodies and their exactness. In particular we consider cuts in a graph and polytopes coming from the graph isomorphism question.

The core of this paper is based on results from \cite{GPT} and \cite{GLPT} which are presented here with a greater emphasis 
on geometry, avoiding some of the algebraic language in the original results. Theorems~\ref{thm:theta_inclusion}, \ref{thm:real_radical_exactness} and their corollaries are new while Theorem~\ref{thm:convex_singularity} is from \cite{GouNet}. The application of theta bodies to polytopes that arise in the graph isomorphism question is taken from \cite{DHMO}.

{\bf Acknowledgments}. Both authors were partially supported by the NSF Focused 
 Research Group grant DMS-0757371. J. Gouveia was also partially
 supported by Funda{\c c}{\~ a}o para a Ci{\^ e}ncia e Tecnologia and 
 R.R. Thomas by the Robert R. and Elaine K. Phelps Endowed Professorship.

\section{Theta bodies of polynomial ideals}

To describe the convex hull of an algebraic set, we start with a simple observation about any convex hull.
Given a set $S \subseteq \RR^n$, $\cl(\conv(S))$, the closure of $\conv(S)$, is the intersection of all closed half-spaces containing $S$:
$$
\cl(\conv(S)) = \{\pp \in \RR^n : l(\pp) \geq 0 \,\,\forall\,\,l \in \RR[\xx]_1 \textrm{ s.t. } l|_S \geq 0\}.
$$
From a computational point of view, this observation is useless, as the right hand side is hopelessly cumbersome. 
However, if $S$ is the zero set of an ideal $I \subseteq \RR[\xx]$, we can define nice relaxations of the above intersection of infinitely many half-spaces using a classical strengthening of the nonnegativity condition $l|_S \geq 0$. We describe these relaxations in this section. In Section 2.1 we introduce our method for arbitrary ideals in $\RR[\xx]$. In Section 2.2 we specialize to real radical ideals, which occur frequently in applications, and show that in this case, much stronger results hold than for general ideals.

\subsection{General ideals}

Recall that given an ideal $I \subseteq \RR[\xx]$, two polynomials $f$ and $g$ are defined to be congruent modulo $I$, written as $f \equiv g$ mod $I$, if $f-g \in I$. The relation $\equiv$ is an equivalence relation on $\RR[\xx]$ and the equivalence class of a polynomial $f$ is denoted as $f+I$.  The set of all congruence classes of polynomials modulo $I$ is denoted as $\RR[\xx]/I$ and this set is both a ring and a $\RR$-vector space: given $f,g \in \RR[\xx]$ and $\lambda \in \RR$, $(f+I) + (g+I) = (f+g)+I, \,\,\, \lambda (f+I) = \lambda f + I,  \textup{ and } (f+I)(g+I) = fg+I$. Note that if $f \equiv g$ mod $I$, then $f(\ss) = g(\ss)$ for all $\ss \in \V_{\RR}(I)$.

We will say that a polynomial $h\in \RR[\xx]$ is  a sum of squares (\emph{sos}) modulo $I$ if there exist polynomials $g_1,\ldots,g_r \in \RR[\xx]$ such that $h \equiv \sum_{i=1}^r g_i^2$ mod $I$. If $h$ is sos modulo $I$ then we immediately have that 
$h$ is nonnegative on $\V_\RR(I)$. In practice, it
is important to control the degree of the $g_i$ in the sos representation of $h$,  so we will say that $h$ is  $k$-\emph{sos} mod $I$ if $g_1, \ldots, g_r \in \RR[\xx]_k$, where $\RR[\xx]_k$ is the set of polynomials in $\RR[\xx]$ of degree at most $k$. The set of polynomials that are $k$-sos mod $I$, considered as a subset of $\RR[\xx]_{2k}/I$ will be denoted as $\Sigma_k(I)$.

\begin{definition}\label{def:theta_body}
Let $I \subseteq \RR[\xx]$ be a polynomial ideal. We define the $k$-th \emph{theta body} of $I$ to be the set
$$
\TH_k(I) := \{\pp \in \RR^n : l(\pp) \geq 0 \textrm{ for all } l \in \RR[\xx]_1 \textrm{ s.t. } l \textrm{ is } k\textrm{-sos} \textrm{ mod } I\}.
$$
\end{definition}

Since, if $l$ is sos mod $I$ then $l \geq 0$ on $\V_\RR(I)$, and $\TH_k(I)$ is closed,  $\cl(\conv(\V_{\RR}(I))) \subseteq \TH_k(I)$.  Also, $\TH_k(I) \subseteq \TH_{k-1}(I)$, since as $k$ increases, we are potentially intersecting more half-spaces. Thus, the theta bodies of $I$ create a nested sequence of closed convex relaxations of $\conv(\V_\RR(I))$.

We now present a related semidefinite programming relaxation of $\V_\RR(I)$ using the theory of moments.

For $I \subseteq \RR[\xx]$ an ideal, let $\B=\{f_0 + I,f_1+I, \ldots\}$ be a basis for the $\RR$-vector space $\RR[\xx]/I$. 
We assume that the polynomials $f_i$ representing the elements of $\B$ are minimal degree representatives of their equivalence classes $f_i+I$. This makes the set $\B_k := \{f_i+I \in \B: \deg(f_i) \leq k\}$ well-defined. In this paper we will restrict ourselves to a special type of basis $\B$.

\begin{definition}
Let $I \subseteq \RR[\xx]$ be an ideal. A basis $\B=\{f_0+I,f_1+I,\ldots\}$ of $\RR[\xx]/I$ is a $\theta$-\emph{basis} if it satisfies the following conditions:
\begin{enumerate}
\item $f_0(\xx)=1$;
\item $f_i(\xx)=x_i$, for $i=1,\ldots,n$;
\item $f_i$ is monomial for all $i$;
\item if $\deg(f_i),\deg(f_j) \leq k$ then $f_if_j + I$ is in the real span of $\B_{2k}$.
\end{enumerate}
We will also always assume that $\B$ is ordered and that $\deg(f_i) \leq \deg(f_{i+1})$.
\end{definition}

Using Gr\"obner bases theory, one can see that if $\V_{\RR}(I)$ is not contained in any proper affine space, then $\RR[\xx]/I$ always has a $\theta$-basis. For instance, take the $f_i$ in $\B$ to be the \emph{standard monomials} of an \emph{initial ideal} of $I$ with respect to some \emph{total degree monomial ordering} on $\RR[\xx]$  (see for example \cite{CLO}). The methods we describe below work with non monomial bases of $\RR[\xx]/I$ as explained in \cite{GPT}. We restrict to a $\theta$-basis in this survey for ease of exposition and since the main applications we will discuss only need this type of basis.

Fix a $\theta$-basis $\B$ of $\RR[\xx]/I$ and define $\ff_k[\xx]$ to be the column vector formed by all the elements of $\B_k$ in order.
Then $\ff_k[\xx] \ff_k[\xx]^T$ is a square matrix indexed by $\B_k$ with $(i,j)$-entry equal to $f_if_j + I$. By hypothesis, the entries of  $\ff_k[\xx] \ff_k[\xx]^T$ lie in the $\RR$-span of $\B_{2k}$. Let \{ $\lambda_{i,j}^l$ \} be the set of real numbers such that $f_i f_j + I = \sum_{f_l + I \in \B_{2k}} \lambda_{i,j}^l f_l + I$.
We now linearize  $\ff_k[\xx] \ff_k[\xx]^T$ by replacing each element of $\B_{2k}$ by a new variable.

\begin{definition}
Let $I$, $\B$ and \{ $\lambda_{i,j}^l$ \} be as above. Let $\yy$ be a real vector indexed by $\B_{2k}$. The $k$-\emph{th combinatorial moment
matrix} $M_{\B_k}(\yy)$ of $I$  is the real matrix indexed by $\B_k$ whose $(i,j)$-entry is 
$[M_{\B_k}(\yy)]_{i,j}=\sum_{f_l + I \in \B_{2k}} \lambda_{i,j}^l y_l.$
\end{definition}

\begin{example}
Let $I  \subseteq  \RR[x_1,x_2]$ be the ideal $I=\langle x_1^2-2x_2+x_1, x_1x_2 \rangle$. 
Then a $\theta$-basis for $I$ would be $\B=\{1,x_1,x_2,x_2^2,x_2^3,x_2^4, \ldots\}$. 
Let us construct the matrix $M_{\B_1}(\yy)$. Consider the vector $\ff_1[\xx]=(1 \,x_1  \,x_2)^T$, then
$$\ff_1[\xx] \ff_1[\xx]^T = \left( 
\begin{array}{ccc}
1 & x_1 & x_2 \\ x_1 & x_1^2 & x_1x_2 \\ x_2 & x_1x_2 & x_2^2 \end{array} \right)
\equiv  \left( \begin{array}{ccc}
1 & x_1 & x_2 \\ x_1 & 2x_2-x_1 & 0 \\ x_1 & 0 & x_2^2 \end{array} \right) \textup{ mod } I.$$
We now linearize the resulting matrix using $\yy=(y_0,y_1,y_2, \ldots)$, where $y_i$ indexes the $i$th element of $\B$, 
and get
$$M_{\B_1}(\yy)=\left( \begin{array}{ccc}
y_0 & y_1 & y_2 \\ y_1 & 2y_2-y_1 & 0 \\ y_2 & 0 & y_3 \end{array} \right).$$
\end{example}

The matrix $M_{\B_k}(\yy)$ will allow us to define a relaxation of $\cl(\conv(\V_{\RR}(I)))$ that will essentially be the theta body $\TH_k(I)$.

\begin{definition}
Let $I \subseteq \RR[\xx]$ be an ideal and $\B$ a $\theta$-basis of $\RR[\xx]/I$. Then
$$ Q_{\B_k}(I) := \pi_{\RR^n}(\{\yy \in \RR^{\B_{2k}}: y_0=1, M_{\B_k}(\yy) \succeq 0\}) $$
where $\pi_{\RR^n}$ is projection of $\yy \in \RR^{B_{2k}}$ to $(y_1, \ldots, y_n)$,  its coordinates 
indexed by $x_1+I,\ldots,x_n+I$.
\end{definition}

The set $Q_{\B_k}(I)$ is a relaxation of $\conv(\V_\RR(I))$. Pick $\ss \in \V_\RR(I)$ and define $\yy^{\ss} := (f_i(\ss) \,:\, f_i+I \in \B_{2k})$. Then $\yy^{\ss} (\yy^{\ss})^t = M_{\B_k}(\yy^{\ss})$, $y^{\ss}_0 = 1$ and $\pi_{\RR^n}(y^{\ss}) = \ss$. We now show the connection between $Q_{\B_k}(I)$ and $\TH_k(I)$.

\begin{theorem}\label{thm:lasserre_vs_theta_1}
For any ideal $I\subseteq \RR[\xx]$ and any $\theta$-basis $\B$ of $\RR[\xx]/I$, we get 
$\cl(Q_{\B_k}(I)) \subseteq \TH_k(I)$. 
\end{theorem}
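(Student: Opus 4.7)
The plan is to reduce the closure statement to the inclusion $Q_{\B_k}(I) \subseteq \TH_k(I)$, since $\TH_k(I)$ is defined as an intersection of closed halfspaces and is therefore closed. So the real content is to pick an arbitrary $\pp \in Q_{\B_k}(I)$ and show that every linear polynomial $l \in \RR[\xx]_1$ which is $k$-sos mod $I$ is nonnegative at $\pp$.

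Since $\pp \in Q_{\B_k}(I)$, there is a $\yy \in \RR^{\B_{2k}}$ with $y_0 = 1$, $M_{\B_k}(\yy) \succeq 0$, and $y_i = p_i$ for $i=1,\ldots,n$ (using that $f_0+I = 1+I$ and $f_i+I = x_i+I$ are the first $n+1$ elements of the $\theta$-basis). The natural tool is the linear functional $L: \RR[\xx]_{2k}/I \to \RR$ defined by $L(f_l+I) := y_l$ for each $f_l+I \in \B_{2k}$ and extended linearly. By construction $L(1+I)=1$ and $L(x_i+I) = p_i$, so for any $l = a_0 + \sum a_i x_i \in \RR[\xx]_1$ one has $L(l+I) = l(\pp)$.

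The next step is to verify that $L(g^2+I) \geq 0$ for every $g \in \RR[\xx]_k$. Expanding $g+I = \sum c_i (f_i+I)$ in the basis $\B_k$ and using the structure constants $\lambda_{i,j}^l$ that define $M_{\B_k}(\yy)$, a direct computation gives
$$L(g^2 + I) = \sum_{i,j} c_i c_j \sum_l \lambda_{i,j}^l y_l = \cc^T M_{\B_k}(\yy) \cc \geq 0,$$
where the last inequality uses $M_{\B_k}(\yy) \succeq 0$. This is the heart of the argument and is essentially a bookkeeping exercise, but it does crucially require the $\theta$-basis property that $f_if_j + I$ lie in the span of $\B_{2k}$ when $\deg f_i, \deg f_j \leq k$, so that $L$ can be evaluated on these products.

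Once this is established, finishing is immediate: if $l$ is $k$-sos mod $I$, write $l \equiv \sum_j g_j^2$ mod $I$ with $g_j \in \RR[\xx]_k$, apply $L$ to both sides, and conclude $l(\pp) = L(l+I) = \sum_j L(g_j^2+I) \geq 0$. Hence $\pp \in \TH_k(I)$, giving $Q_{\B_k}(I) \subseteq \TH_k(I)$, and closedness of $\TH_k(I)$ upgrades this to $\cl(Q_{\B_k}(I)) \subseteq \TH_k(I)$. The only subtlety to keep in mind is that the definition of $L$ is well-posed precisely because $\B_{2k}$ is a basis of $\RR[\xx]_{2k}/I$, and that evaluating $L$ on $g^2+I$ for $\deg g \leq k$ stays within $\RR[\xx]_{2k}/I$, which is exactly what $\theta$-bases are designed to ensure.
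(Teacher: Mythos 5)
Your proof is correct and follows essentially the same route as the paper's: both reduce, via closedness of $\TH_k(I)$, to showing $Q_{\B_k}(I) \subseteq \TH_k(I)$, and both verify that a $k$-sos mod $I$ linear polynomial is nonnegative at a point of $Q_{\B_k}(I)$ by observing that the moment matrix pairs nonnegatively with the squares appearing in the sos certificate. The only cosmetic difference is that you phrase this via the moment functional $L$ and the evaluation $L(g^2+I)=\cc^T M_{\B_k}(\yy)\cc$, whereas the paper packages the sos certificate into a single psd Gram matrix $P_h$ and uses the trace inner product $P_h\cdot M_{\B_k}(\yy)\geq 0$; these are the same computation.
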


\begin{proof}
We start with a general observation concerning $k$-sos polynomials. 
Suppose $h\equiv \sum_{i=1}^r g_i^2 \mod I$ where $g_i \in \RR[\xx]_k$. 
Each $g_i$ can be identified with a real row vector $\hat{g_i}$ such that 
$g_i(\xx) \equiv  \hat{g_i} \ff_k[\xx] \mod I$, and so
 $g_i^2 \equiv \ff_k[\xx]^T \hat{g_i}^T \hat{g_i} \ff_k[\xx]$.
Denoting by $P_h$ the positive semidefinite matrix $\sum_{i=1}^r \hat{g_i}^T \hat{g_i}$ we get 
$h(\xx)\equiv \ff_k[\xx]^T P_h \ff_k[\xx] \mod I$. In general, $P_h$ is not unique. 
Let $\hat{h}$ be the real row vector such that $h(\xx) \equiv \hat{h}\ff_{2k}[\xx]$ mod $I$. 
Then check that for any column vector 
$\yy \in \RR^{\B_{2k}}$, $\hat{h} \yy = P_h \cdot M_{\B_k}(\yy)$, where $\cdot$ stands for
the usual entry-wise inner product of matrices.

Suppose $\pp \in Q_{\B_k}(I)$, and $\yy \in \RR^{\B_{2k}}$ such that 
$y_0=1$, $M_{\B_k}(\yy) \succeq 0$ 
and $\pi_{\RR^n}(\yy)=\pp$. Since $\TH_k(I)$ is closed, 
we just have to show that for any $h = a_0 + \sum_{i=1}^{n} a_i x_i \in \RR[\xx]_1$ 
that is $k$-sos modulo $I$, $h(\pp) \geq 0$. Since $y_0=1$ and $\pi_{\RR^n}(\yy)=\pp$, 
$h(\pp) = a_0 + \sum_{i=1}^{n} a_i p_i = a_0 y_0 + 
\sum_{i=1}^{n} a_i y_i = \hat{h} \yy = P_h \cdot M_{\B_k}(\yy) \geq 0$ since 
$P_h \succeq 0$ and $M_{\B_k}(\yy) \succeq 0$.
\end{proof}

In the next subsection we will see that when $I$ is a real radical ideal, 
$\textup{cl}(Q_{\B_k}(I))$ coincides with $\TH_k(I)$.

The idea of computing approximations of the convex hull of a semialgebraic set 
in $\RR^n$ via the theory of moments and the dual theory of sums of
 squares polynomials is due to Lasserre \cite{Lasserre1,Lasserre2, Lasserre3} 
 and Parrilo \cite{Parrilo:phd,Parrilo:spr}. 
 In his original set up, the moment relaxations obtained are described
 via moment matrices that rely explicitly on the polynomials defining the 
 semialgebraic set. In \cite{Lasserre2}, the focus is on semialgebraic
 subsets of $\{0,1\}^n$ where the equations $x_i^2-x_i$ are used to 
 simplify computations. This idea was generalized in \cite{Laurent} to arbitrary real 
algebraic varieties and studied in detail for zero-dimensional ideals. Laurent showed 
that the moment matrices needed in the approximations of $\conv(\V_\RR(I))$ 
could be computed modulo the ideal defining the variety.  
This greatly reduces the size of the matrices needed, and removes
the dependence of the computation on the specific presentation of the 
ideal in terms of generators. The construction of the set $Q_{\B_k}(I)$ 
is taken from \cite{Laurent}. Since an 
algebraic set is also a semialgebraic set (defined by equations), 
we could apply Lasserre's  method to $\V_\RR(I)$  to get a sequence of approximations 
$\conv(\V_\RR(I))$. The results are essentially the 
same as theta bodies if the generators of $I$ are picked
 carefully. However, by restricting ourselves to real varieties, instead of allowing 
 inequalities, and concentrating on the sum of squares description of theta bodies, as 
 opposed to the moment matrix approach, we can prove some interesting 
 theoretical results that are not covered by the general theory for Lasserre relaxations. Many 
 of the usual results for Lasserre relaxations rely on the existence of a non-empty interior for the 
 semialgebraic set to be convexified which is never the case for a real variety, or compactness of the semialgebraic set 
 which we do not want to impose.

\subsection{Real radical ideals}

Recall from the introduction that given an ideal $I \subseteq \RR[\xx]$ its real radical is the ideal
$$\sqrt[\RR]{I}=\left\{ f \in \RR[\xx] : f^{2m} + \sum g_i^2 \in I, m \in \NN, g_i \in \RR[\xx]\right\}.$$
The importance of this ideal arises from the Real Nullstellensatz.

\begin{theorem}[Bounded degree Real Nullstellensatz \cite{Lombardi}] \label{thm:real_nullstellensatz}
Let $I \subseteq \RR[\xx]$ be an ideal. Then there exists a function 
$\varGamma:\NN\rightarrow \NN$ such that, for all polynomials $f \in \RR[\xx]$ of
degree at most $d$ that vanish on $\V_{\RR}(I)$, $f^{2m} +  \sum g_i^2 \in I$ 
for some polynomials $g_i$ such that
$\deg(g_i)$ and $m$ are all bounded above by $\varGamma(d)$. In particular $\I(\V_{\RR}(I))=\sqrt[\RR]{I}$.
\end{theorem}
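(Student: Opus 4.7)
The plan is to prove the theorem in two stages: first the qualitative statement $\I(\V_\RR(I)) = \sqrt[\RR]{I}$, and then the effective degree bound. One inclusion of the qualitative statement is immediate: if $f^{2m} + \sum g_i^2 \in I$ and $\ss \in \V_\RR(I)$, evaluating this identity at $\ss$ yields a sum of nonnegative reals equal to zero, forcing $f(\ss) = 0$; hence $\sqrt[\RR]{I} \subseteq \I(\V_\RR(I))$.

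For the reverse inclusion, I would invoke the Positivstellensatz of Krivine and Stengle. If $f \in \I(\V_\RR(I))$, then the semialgebraic system defined by the vanishing of all polynomials in $I$ together with the constraint $f \neq 0$ has no real solution, so the Positivstellensatz furnishes a positive integer $m$, a sum of squares $\sigma$, and a polynomial $h \in I$ such that $f^{2m} + \sigma + h = 0$. Rearranging gives $f^{2m} + \sigma \in I$, placing $f$ in $\sqrt[\RR]{I}$. This establishes the set equality claimed at the end of the statement.

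To upgrade this to bounded degrees, I would appeal to the effective Positivstellensatz of Lombardi (and its later quantitative refinements), which guarantees that whenever a Positivstellensatz certificate exists, one can be found whose total degrees are bounded by a primitive recursive function of the degrees of the input polynomials. Applied to our situation with a fixed set of generators of $I$ and an arbitrary $f$ of degree at most $d$, this yields a uniform bound on both $m$ and the degrees of the $g_i$ appearing in the certificate; packaging the maximum of these bounds as a function of $d$ produces the desired $\varGamma$. The main obstacle is the effective Positivstellensatz itself: producing an explicit bound requires threading a constructive real-closed-field argument---together with Hermann-type bounds for ideal membership in polynomial rings---through the inductive structure of the Positivstellensatz proof. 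The resulting $\varGamma(d)$ is notoriously large (typically a tower of exponentials in $n$ and the degrees of the generators of $I$), but for the purposes of this article only its existence matters.
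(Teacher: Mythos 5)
The paper does not prove this theorem---it is stated with a citation to Lombardi and used as a black box throughout. Your sketch is the standard derivation (trivial inclusion by evaluation; the hard inclusion via the Krivine--Stengle Positivstellensatz applied to the inconsistent system $\{h = 0 \;\forall h \in I,\ f \neq 0\}$; effective degree bounds from Lombardi's constructive Positivstellensatz), and it correctly identifies where the real work lies, namely in the effective bound, whose proof is genuinely hard and is not reproduced here either. One small point worth making explicit: the uniformity of $\varGamma$ in $f$ (as opposed to a bound depending on $f$ itself) comes precisely because Lombardi's bound depends only on the degrees and number of the polynomials entering the certificate, so fixing a finite generating set of $I$ and letting $f$ range over $\RR[\xx]_d$ gives a bound depending only on $d$ and on $I$; you gesture at this but it deserves to be stated, since it is exactly what Theorem~\ref{thm:theta_inclusion} needs.
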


When $I$ is a real radical ideal, the sums of squares approach and the moment approach for theta bodies of $I$ coincide, and we get a stronger version of Theorem \ref{thm:lasserre_vs_theta_1}.

\begin{theorem}\label{thm:lasserre_vs_theta}
For any $I\subseteq \RR[\xx]$ real radical and any $\theta$-basis $\B$ of $\RR[\xx]/I$,  $\cl(Q_{\B_k}(I)) = \TH_k(I)$.
\end{theorem}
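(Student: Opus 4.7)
The plan is to establish the reverse inclusion $\TH_k(I) \subseteq \cl(Q_{\B_k}(I))$ (the forward one being Theorem~\ref{thm:lasserre_vs_theta_1}) through a conic duality computation in the finite-dimensional space $V := \RR[\xx]_{2k}/I$, coordinatized by $\B_{2k}$, together with a closedness statement for $\Sigma_k(I) \subseteq V$ that is special to real radical ideals. The duality will identify each side of the claimed equality as an intersection of linear halfspaces indexed by the cones $\cl(\Sigma_k(I)) \cap \RR[\xx]_1$ and $\Sigma_k(I) \cap \RR[\xx]_1$ respectively, so closedness of $\Sigma_k(I)$ finishes the job.

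For the duality, I would extend the identity $\langle g^2+I,\yy\rangle = \hat{g}^{T} M_{\B_k}(\yy)\hat{g}$ from the proof of Theorem~\ref{thm:lasserre_vs_theta_1} (with $\hat{g}$ the coordinate vector of $g\in\RR[\xx]_k$ in $\B_k$) to deduce $\Sigma_k(I)^{*} = \{\yy \in \RR^{\B_{2k}} : M_{\B_k}(\yy)\succeq 0\}$. The bipolar theorem in finite dimensions then gives $\cl(\Sigma_k(I)) = \Sigma_k(I)^{**}$. For $l = a_0 + \sum_{i} a_i x_i \in \RR[\xx]_1$, the $\theta$-basis assumption forces the coordinate vector of $l+I$ to be supported on $\{1, x_1,\ldots,x_n\}\subseteq\B$, so $\langle l+I,\yy\rangle = a_0 y_0 + \sum_i a_i y_i$. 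Since $[M_{\B_k}(\yy)]_{0,0} = y_0$ and $[M_{\B_k}(\yy)]_{0,i} = y_i$ for $i=1,\ldots,n$, any $\yy$ with $M_{\B_k}(\yy)\succeq 0$ either has $y_0 > 0$ and rescales to the slice $\{y_0 = 1\}$ defining $Q_{\B_k}(I)$, or has $y_0 = 0$, in which case the PSD condition forces the entire $0$-th row to vanish and $\langle l+I, \yy\rangle$ is trivially zero. Unpacking this gives $l \in \cl(\Sigma_k(I))\cap\RR[\xx]_1$ precisely when $l \geq 0$ on $Q_{\B_k}(I)$, whence $\cl(Q_{\B_k}(I)) = \{\pp : l(\pp) \geq 0 \,\,\forall\, l \in \cl(\Sigma_k(I)) \cap \RR[\xx]_1\}$.

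The crux, and the main obstacle, is to upgrade $\cl(\Sigma_k(I))$ to $\Sigma_k(I)$ when $I$ is real radical. I would view $\Sigma_k(I)$ as the image of the PSD cone in $\mathrm{Sym}(\RR^{|\B_k|})$ under the linear map $T: A \mapsto \ff_k^{T} A \ff_k + I$. For $A \succeq 0$ decomposed as $\sum_{\ell} v_{\ell} v_{\ell}^{T}$ and $g_\ell := \sum_j v_{\ell,j} f_j \in \RR[\xx]_k$, one has $T(A) = \sum_{\ell} g_{\ell}^{2} + I$. If this vanishes then $\sum_\ell g_\ell^2 \in I = \sqrt[\RR]{I}$, and applying the defining property of the real radical to $f = g_\ell$ with exponent $m = 1$ forces each $g_\ell \in I$; linear independence of $\B_k$ in $V$ then forces $v_\ell = 0$ for every $\ell$, so $A = 0$. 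Thus $\mathrm{PSD} \cap \ker T = \{0\}$. A standard compactness argument now shows $\mathrm{PSD} + \ker T$ is closed: given any sequence $P_n + K_n \to q$ with $P_n \succeq 0$ and $K_n \in \ker T$, either $\|P_n\|$ is bounded and one extracts a convergent subsequence whose limit lies in $\mathrm{PSD} + \ker T$, or $\|P_n\| \to \infty$, in which case $P_n/\|P_n\|$ subconverges to a unit vector in the closed set $\mathrm{PSD} \cap \ker T = \{0\}$, a contradiction. The image of this closed set under the induced injection $\mathrm{Sym}(\RR^{|\B_k|})/\ker T \hookrightarrow V$ is precisely $\Sigma_k(I)$, which is therefore closed, completing the proof. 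Without real radicality, the step $\sum_\ell g_\ell^2 \in I \Rightarrow g_\ell \in I$ fails (only $g_\ell \in \sqrt[\RR]{I}$ is guaranteed), so $\mathrm{PSD}$ can meet $\ker T$ nontrivially and the gap between $\cl(Q_{\B_k}(I))$ and $\TH_k(I)$ can persist.
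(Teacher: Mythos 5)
Your argument is correct and follows the same overall strategy as the paper: establish that $\Sigma_k(I)$ is closed when $I$ is real radical, then transfer this via conic duality to the claimed equality of convex bodies. The genuine difference is that the paper simply cites \cite[Prop.~2.6]{PowSchei} for the closedness of $\Sigma_k(I)$, whereas you give a self-contained proof: you realize $\Sigma_k(I)$ as $T(\mathrm{PSD})$ for $T : A \mapsto \ff_k[\xx]^T A \ff_k[\xx] + I$, show that real radicality forces $\mathrm{PSD} \cap \ker T = \{0\}$ (applying the definition of $\sqrt[\RR]{I}$ with $m=1$ to each square in a rank-one decomposition and invoking linear independence of $\B_k$), and then run the standard normalization argument to conclude that $\mathrm{PSD} + \ker T$ is closed, whence $\Sigma_k(I)$ is closed as the image of a saturated closed set under the induced injection. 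Your bipolar-theorem phrasing of the duality, with the case split $y_0 = 0$ (PSD zeros out the top row, so every affine pairing vanishes) versus $y_0 > 0$ (rescale into the $y_0 = 1$ slice), is an equivalent and slightly more symmetric repackaging of the paper's explicit separating-functional argument, which instead forces $y_0 > 0$ by pairing with the quadratics $(f+r)^2$ and letting $r$ grow. Both routes are correct; yours pays with a short quotient-space step for the benefit of not outsourcing the closedness lemma, and in the process makes explicit exactly where the real-radical hypothesis enters.
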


\begin{proof}
By Theorem \ref{thm:lasserre_vs_theta_1} we just have to show that $\TH_k(I) \subseteq \cl(Q_{\B_k}(I))$. 
By \cite[Prop 2.6]{PowSchei}, the set $\Sigma_k(I)$ of 
elements of $\RR[\xx]_{2k}/I$ that are $k$-sos modulo $I$,
is closed when $I$ is real radical. Let $f \in \RR[\xx]_1$ 
be nonnegative on $\cl(Q_{\B_k}(I))$ and suppose $f+I \not \in \Sigma_k(I)$. 
By the separation theorem, 
we can find $\yy \in \RR^{\B_k}$ such that $\hat{f} \yy < 0$ but $\hat{g} \yy \geq 0$
for all $g + I \in \Sigma_k(I)$, or equivalently, $P_g \cdot M_{\B_k}(\yy) \geq 0$. 
Since $P_g$ runs over all possible positive semidefinite matrices of size $|\B_k|$,
and the cone of positive semidefinite matrices of a fixed size is self-dual, 
we have $M_{\B_k}(\yy) \succeq 0$.
Let $r$ be any real number and consider $g_r  + I := (f+r)^2  + I \in \Sigma_k(I)$. Then 
$\hat{g_r} \yy =  \hat{f^2}  \yy+ 2r \hat{f}  \yy+ r^2 y_0 \geq 0$.
Since $\hat{f} \yy < 0$ and $r$ can be arbitrarily
large, $y_0$ is forced to be positive. So we can scale $\yy$ 
to have $y_0=1$, so that $\pi_{\RR^n}(\yy) \in Q_{\B_k}(I)$. By hypothesis we then have
$f(\pi_{\RR^n}(\yy)) \geq 0$, but by the linearity of $f$, 
$f(\pi_{\RR^n}(\yy))=\hat{f} \yy < 0$ which is a contradiction, so $f$ must be $k$-sos
modulo $I$. This implies that any linear inequality 
valid for $\cl(Q_{\B,k}(I))$ is valid for $\TH_k(I)$, 
which proves $\TH_k(I) \subseteq \cl(Q_{\B,k}(I))$.
\end{proof}

We now have two ways of looking at the relaxations $\TH_k(I)$ --- 
one by a characterization of the linear inequalities that hold on them and the other by a characterization of 
the points in them. These two perspectives complement each other. 
The inequality version is useful to prove (or disprove) convergence of 
theta bodies to $\textup{cl}(\conv(\V_{\RR}(I)))$ while the description via 
semidefinite programming is essential for practical computations. 
All the applications we consider use real radical ideals in which case 
the two descriptions of $\TH_k(I)$ coincide up to closure. 
In some cases, as we now show, the closure can be omitted.

\begin{theorem} \label{thm:omit closure}
Let $I\subseteq \RR[\xx]$ be a real radical ideal and $k$ be a positive integer. 
If there exists some linear polynomial $g$ that is $k$-sos modulo $I$
with a representing matrix $P_g$ that is positive definite, then $Q_{\B_k}(I)$ is 
closed and equals $\TH_k(I)$. 
\end{theorem}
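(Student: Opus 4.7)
By Theorem~\ref{thm:lasserre_vs_theta} we already know $\cl(Q_{\B_k}(I)) = \TH_k(I)$, so it suffices to show that $Q_{\B_k}(I)$ is already closed. I would take a convergent sequence $\pp^{(m)}\to \pp$ with $\pp^{(m)} \in Q_{\B_k}(I)$ and corresponding lifts $\yy^{(m)} \in \RR^{\B_{2k}}$ satisfying $y_0^{(m)}=1$, $M_{\B_k}(\yy^{(m)})\succeq 0$, $\pi_{\RR^n}(\yy^{(m)})=\pp^{(m)}$, and show that $\pp$ again lies in $Q_{\B_k}(I)$. The strategy is to bound the moment matrices $M^{(m)} := M_{\B_k}(\yy^{(m)})$, extract a convergent subsequence, and lift the limit back.

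The role of $g$ is to provide a coercive bound on $M^{(m)}$. By the identity isolated in the proof of Theorem~\ref{thm:lasserre_vs_theta_1}, $P_g \cdot M^{(m)} = \hat{g}\,\yy^{(m)}$. Because $g$ is linear, $\hat{g}$ has nonzero entries only in the coordinates indexed by $1, x_1,\ldots, x_n$, and those coordinates of $\yy^{(m)}$ are $1, p_1^{(m)},\ldots,p_n^{(m)}$; hence $\hat{g}\,\yy^{(m)} = g(\pp^{(m)})$, which converges to $g(\pp)$ and is in particular bounded. Since $P_g \succ 0$, pick $\lambda>0$ with $P_g \succeq \lambda I$. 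Because $M^{(m)}\succeq 0$,
$$
g(\pp^{(m)}) \;=\; \textup{tr}\bigl(P_g M^{(m)}\bigr) \;\geq\; \lambda\,\textup{tr}\bigl(M^{(m)}\bigr),
$$
so the (nonnegative) diagonal entries of $M^{(m)}$ are uniformly bounded. Combined with $[M^{(m)}]_{ij}^2 \leq [M^{(m)}]_{ii}[M^{(m)}]_{jj}$, which holds for any PSD matrix, this bounds every entry of $M^{(m)}$. Hence $\{M^{(m)}\}$ is relatively compact in a finite-dimensional space and admits a convergent subsequence $M^{(m_j)}\to M^*$.

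The limit $M^*$ is PSD (the PSD cone is closed) and satisfies $M^*_{0,0}=1$ and $M^*_{0,i}=p_i$ for $i=1,\ldots,n$, because $[M_{\B_k}(\yy)]_{0,0}=y_0$ and $[M_{\B_k}(\yy)]_{0,i}=y_i$. To finish, I would lift $M^*$ back to some $\yy^*$: the linear map $\yy\mapsto M_{\B_k}(\yy)$ between finite-dimensional spaces has closed image, so $M^* = M_{\B_k}(\yy^*)$ for some $\yy^* \in \RR^{\B_{2k}}$. The entries $(0,0)$ and $(0,i)$ then force $y_0^*=1$ and $\pi_{\RR^n}(\yy^*)=\pp$, proving $\pp\in Q_{\B_k}(I)$.

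The one substantive step is the coercivity bound: turning the scalar equation $P_g\cdot M^{(m)} = g(\pp^{(m)})$ into a uniform bound on the whole matrix $M^{(m)}$, which is exactly what the positive definiteness of $P_g$ is designed to give. The rest is standard finite-dimensional compactness, once one notes that $y_0$ and $\pi_{\RR^n}(\yy)$ are literally entries of the moment matrix.
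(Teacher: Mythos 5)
Your argument is correct, and it is genuinely different in flavor from the paper's proof, which invokes a closed-range theorem from convex duality. The paper sets $A:\RR^{\B_1}\hookrightarrow\RR^{\B_{2k}}$ to be the inclusion and $H\subseteq\RR^{\B_{2k}}$ the cone of $k$-sos elements; the hypothesis on $g$ says exactly that $A(\RR^{\B_1})$ meets $\textup{int}(H)$ (a Slater-type interiority condition), whence $A'(H^*)$ is closed by a standard result (Corollary 3.3.13 in Borwein--Lewis), and $Q_{\B_k}(I)$ is an affine slice of that set. You instead use $P_g\succ 0$ as a source of \emph{coercivity}: from $\textup{tr}(P_g M^{(m)}) = g(\pp^{(m)})$ bounded and $P_g\succeq \lambda I$ you get a uniform bound on $\textup{tr}(M^{(m)})$, hence on all entries, and conclude by extracting a convergent subsequence of moment matrices and lifting the limit through the (linear, hence closed-image) map $\yy\mapsto M_{\B_k}(\yy)$. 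The identity $\hat{g}\,\yy = P_g\cdot M_{\B_k}(\yy)$, and the reading-off of $y_0^*$ and $\pi_{\RR^n}(\yy^*)$ from the $(0,0)$ and $(0,i)$ entries using $f_0=1$, $f_i=x_i$, are all used correctly. The trade-off: the paper's route is shorter once one accepts the cited duality fact and simultaneously reproves $\cl(Q_{\B_k}(I))=\TH_k(I)$, while your route is self-contained and elementary (pure finite-dimensional compactness) but relies on Theorem~\ref{thm:lasserre_vs_theta} to supply the identification with $\TH_k(I)$, which is a perfectly legitimate structuring.
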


\begin{proof}
For this proof we will use a standard result from convex analysis: 
Let $V$ and $W$ be finite   dimensional vector spaces, $H \subseteq W$ be a cone and 
$A \,:\, V \rightarrow W$ be a linear map such that 
$A(V) \cap \textup{int}(H) \neq \emptyset$. Then $(A^{-1}H)^* = A'(H^*)$ where $A'$ is the adjoint
operator to $A$.  In particular, $A'(H^*)$ is closed in $V'$, the dual vector space to $V$. The proof 
of this result follows, for example, from Corollary~3.3.13 in \cite{BorweinLewis} by setting $K = V$.

Throughout the proof we will identify $\RR[\xx]_l/I$, for all $l$, with 
the space $\RR^{\B_l}$ by simply considering the coordinates in the basis $\B_l$.
Consider the inclusion map $A \,: \, \RR^{\B_1}\cong \RR^{n+1}\rightarrow \RR^{\B_{2k}}$, 
and let $H$ be the cone in $\RR^{\B_{2k}}$ of polynomials
that can be written as a sum of squares of polynomials of 
degree at most $k$. The interior of this cone is precisely the
set of sums of squares $g$ with a positive definite 
representing matrix $P_g$. Our hypothesis then simply states that 
$A(\RR^{\B_1}) \cap \textup{int}(H) \not = \emptyset$ which implies by the 
above result that $A'(H^*)$ is closed. Note that $H^*$ is the set 
of elements $\yy$ in $\RR^{\B_{2k}}$ 
such that $\hat{h} \yy$ is nonnegative for all $h \in H$ and this 
is the same as demanding $P_h \cdot M_{\B_k}(\yy) \geq 0$ 
for all positive semidefinite matrices $P_h$, which is equivalent to 
demanding that $M_{\B_k}(\yy)\succeq 0$. So $A'(H^*)$ is just the set
$\pi_{\RR^{n+1}}(\{ \yy \in \RR^{\B_{2k}}: M_{\B_k}(\yy) \succeq 0\}) $ and by 
intersecting it with the plane $\{\yy \in \RR^{\B_{2k}}: y_0 = 1\}$
we get $Q_{\B_k}(I)$ which is therefore closed.
\end{proof}

One very important case where the conditions of 
Theorem~\ref{thm:omit closure} holds is when 
$I$ is the vanishing ideal of a set of $0/1$ points. This is precisely the case
of most interest in combinatorial optimization.

\begin{corollary}\label{cor:01}
If $S \subseteq \{0,1\}^n$ and $I=\I(S)$, then $Q_{\B_k}(I)=\TH_k(I)$.
\end{corollary}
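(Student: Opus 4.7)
The plan is to deduce the corollary from Theorem~\ref{thm:omit closure}. This requires verifying that $I$ is real radical and exhibiting, for each $k$, a linear polynomial that is $k$-sos modulo $I$ whose representing matrix is strictly positive definite. Real radicality is immediate: since $S$ is finite, $\V_\RR(I) = S$, so the Real Nullstellensatz gives $\sqrt[\RR]{I} = \I(\V_\RR(I)) = I$.

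The main construction exploits the $0/1$ structure via the relations $x_j^2 - x_j \in I$ for every $j$. The first step is to observe that any $\theta$-basis $\B$ of $\RR[\xx]/I$ consists entirely of squarefree monomials: a monomial $x^\alpha$ with some $\alpha_j \geq 2$ reduces modulo $I$ (via $x_j^{\alpha_j} \equiv x_j$) to a strictly lower-degree monomial, contradicting the minimality built into the definition of a $\theta$-basis. From this, $f_i^2 \equiv f_i$ mod $I$ for every $f_i \in \B$, and the elementary identity $f_i^2 + (1-f_i)^2 = 2 f_i^2 - 2 f_i + 1$ then collapses modulo $I$ to $f_i^2 + (1-f_i)^2 \equiv 1$ mod $I$.

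Summing this over $f_i \in \B_k$ produces a $k$-sos certificate for the constant (hence linear) polynomial $g := |\B_k|$, with each $f_i$ and $1 - f_i$ of degree at most $k$. The associated representing matrix is $P_g = \sum_{f_i \in \B_k}\bigl(e_{f_i} e_{f_i}^T + (e_0 - e_{f_i})(e_0 - e_{f_i})^T\bigr)$, where $e_{f_i}$ is the standard basis vector of $\RR^{\B_k}$ at coordinate $f_i$ and $e_0$ corresponds to $f_0 = 1$. Evaluating the quadratic form gives $u^T P_g u = \sum_{f_i \in \B_k} u_{f_i}^2 + \sum_{f_i \in \B_k} (u_0 - u_{f_i})^2 \geq \|u\|^2$, since the first sum already runs over every coordinate of $\RR^{\B_k}$; hence $P_g \succ 0$. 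An appeal to Theorem~\ref{thm:omit closure} then yields $Q_{\B_k}(I) = \TH_k(I)$. I expect the only mildly subtle step to be the squarefreeness of $\B$; the sos identity and positive-definiteness check are otherwise routine.
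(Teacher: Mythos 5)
Your proposal is correct and follows essentially the same strategy as the paper: reduce to Theorem~\ref{thm:omit closure} by producing a constant polynomial that is $k$-sos mod $I$ with a positive definite representing matrix (the paper chooses the matrix with $(l+1)$, $-2$'s, and $4$'s and checks positive definiteness via a Schur complement, whereas you build $P_g$ from the explicit decomposition $\sum_{f_i\in\B_k}\bigl(f_i^2+(1-f_i)^2\bigr)\equiv|\B_k|$ and check the quadratic form directly — both are fine). One remark: the squarefreeness detour you flag as the subtle step is actually unnecessary, since $x_j^m\equiv x_j$ mod $I$ for all $m\geq 1$ gives $f^2\equiv f$ mod $I$ for \emph{every} monomial $f$, squarefree or not, which is exactly the observation the paper uses.
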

\begin{proof}
It is enough to show that there is a linear polynomial $g \in \RR[\xx]$ such that $g \equiv
\ff_k[\xx]^T A \ff_k[\xx]$ mod $I$ for a \emph{positive definite} matrix $A$ and some $\theta$-basis $\B$ of $\RR[\xx]/I$. 

Let $A$ be the matrix $$ A =\left(
\begin{array} {cc}
l+1 & \cc^t \\
\cc & D
\end{array}
\right),$$
where $l+1 = | \B_k |$, $\cc \in \RR^{l}$ is the vector with all entries equal to $-2$, and $D \in \RR^{l \times l}$ is the
diagonal matrix with all diagonal entries equal to $4$. This matrix is positive definite since $D$ is positive definite
and its  Schur complement $(l+1) - \cc^t D^{-1} \cc = 1$ is positive.

Since $x_i^2  \equiv x_i$ mod $I$ for $i=1,\ldots,n$ and $\B$ is a monomial basis,
  for any $f+I \in \B$, $f \equiv f^2$ mod $I$. Therefore, the constant (linear polynomial) 
$l+1 \equiv \ff_k[\xx]^T
A\ff_k[\xx]$ mod $I$.
\end{proof}

The assumption that $I$ is real radical seems very strong. 
However, we now establish a 
relationship between the theta bodies of an ideal and 
those of its real radical, showing that 
 $\sqrt[\RR]{I}$ 
determines the asymptotic behavior of $\TH_k(I)$. 
We start by proving a small technical lemma.

\begin{lemma} \label{lemma:epsilon}
Given an ideal $I$ and a polynomial $f$ of degree $d$ such that $-f^{2m}\in \Sigma_k(I)$ 
for some $m,k \in \NN$, the polynomial 
$f + \varepsilon \in \Sigma_{k+4dm}(I)$ for every $\varepsilon > 0$.
\end{lemma}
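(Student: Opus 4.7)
The plan is to transfer the problem to a one-variable polynomial identity, lift it to $\RR[\xx]$ by substituting $t \leftarrow f/\varepsilon$, and then use the hypothesis $-f^{2m} \equiv \sigma \pmod I$ to absorb the unwanted $f^{2m}$ term modulo $I$. The whole proof hinges on finding a univariate polynomial of the form $1 + t + C\, t^{2m}$ that is nonnegative on $\RR$.

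First I would find a constant $C > 0$ such that $q(t) := 1 + t + C\, t^{2m}$ is nonnegative on all of $\RR$. A direct calculus computation shows that the unique critical point is at $t_\ast = -(1/(2mC))^{1/(2m-1)}$ and the minimum value is nonnegative precisely when $C \geq (2m-1)^{2m-1}/(2m)^{2m}$; fix any such $C$. Since $q$ has degree $2m$ and is nonnegative on $\RR$, Hilbert's classical theorem on nonnegative univariate polynomials yields $p_1, p_2 \in \RR[t]$ with $\deg p_i \leq m$ and $q(t) = p_1(t)^2 + p_2(t)^2$.

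Next, substituting $t = f/\varepsilon$ (valid since $\varepsilon>0$) and multiplying through by $\varepsilon$ gives the polynomial identity in $\RR[\xx]$:
$$
\varepsilon + f + \frac{C}{\varepsilon^{2m-1}}\, f^{2m}
\;=\;
\bigl(\sqrt{\varepsilon}\, p_1(f/\varepsilon)\bigr)^{2}
+ \bigl(\sqrt{\varepsilon}\, p_2(f/\varepsilon)\bigr)^{2}.
$$
Since each $p_i$ has degree at most $m$ in $t$ and $\deg f = d$, each $\sqrt{\varepsilon}\, p_i(f/\varepsilon)$ is a polynomial in $\xx$ of degree at most $md$. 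By hypothesis, $-f^{2m} \in \Sigma_k(I)$, so $f^{2m} \equiv -\sum_j g_j^{2} \pmod I$ for polynomials $g_j$ with $\deg g_j \leq k$. Replacing $f^{2m}$ accordingly produces
$$
f + \varepsilon
\;\equiv\;
\sum_{i=1}^{2} \bigl(\sqrt{\varepsilon}\, p_i(f/\varepsilon)\bigr)^{2}
+ \frac{C}{\varepsilon^{2m-1}} \sum_{j} g_j^{2}
\pmod I,
$$
which is an sos representation modulo $I$ whose squared polynomials have degree at most $\max(md, k) \leq k + md \leq k + 4dm$. Hence $f + \varepsilon \in \Sigma_{k+4dm}(I)$.

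The main obstacle is pinning down the univariate identity in the first step — both the existence of a suitable $C$ (an elementary but nonobvious extremal computation) and then the invocation of Hilbert's theorem to get squares of degree at most $m$. Everything after that is formal substitution, since $\sigma$ contributes squared polynomials of degree at most $k$ and the $p_i$-terms contribute squared polynomials of degree at most $md$, both comfortably below the target $k + 4dm$.
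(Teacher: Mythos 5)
Your proof is correct, and it takes a genuinely different and, in fact, cleaner route than the paper's. Where you construct a single nonnegative univariate polynomial $q(t) = 1 + t + Ct^{2m}$, decompose it into two squares by the elementary version of Hilbert's theorem, and then substitute $t = f/\varepsilon$, the paper instead builds its certificate from the truncated Taylor expansion of $\sqrt{\sigma^2 + 4\sigma x}$: squaring the truncation $p$ reproduces $\sigma^2 + 4\sigma x$ up to error terms that are even or odd powers of $x$ of degree at least $2m$ with controlled signs, and these are then absorbed one at a time using the identity
$$f^l + \xi = \tfrac{1}{\xi}\bigl(\tfrac{f^l}{2}+\xi\bigr)^2 + \tfrac{1}{4\xi}(-f^{2m})\,f^{2l-2m}, \qquad l \ge m,$$
which makes $f^l + \xi$ a $(dl+k)$-sos mod $I$. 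The bookkeeping in the paper's argument is what produces the stated bound $k + 4dm$. Your substitution argument avoids the sign bookkeeping entirely: the only quantities to track are $\deg p_i \le m$ (so $p_i(f/\varepsilon)$ has degree at most $md$) and $\deg g_j \le k$, yielding the sharper bound $\max(md,k)$, which trivially satisfies the claimed $k+4dm$. So your approach buys a shorter proof and a better degree bound at the modest cost of invoking the (standard and elementary) fact that a nonnegative univariate real polynomial is a sum of two real squares, whereas the paper's proof is self-contained, constructing its certificates explicitly from the Taylor coefficients.

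One small stylistic note: you don't actually need the precise threshold $C \ge (2m-1)^{2m-1}/(2m)^{2m}$; since $q(t) \to +\infty$ as $C \to \infty$ for any fixed $t \neq 0$ and $q(0)=1>0$, a compactness argument gives existence of such a $C$ immediately. The explicit computation is a nice extra but not load-bearing.
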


\pagebreak

\begin{proof}
First, note that for any $l \geq m$ and any $\xi > 0$ we have
$$f^l + \xi = \frac{1}{\xi} \left(\frac{f^l}{2}+\xi \right)^2 + \frac{1}{4 \xi}(-f^{2m})f^{2l-2m}$$
and so $f^l + \xi$ is $(dl+k)$-sos modulo $I$. For $\sigma >0$, 
define the polynomial $p(x)$ to be the truncation of 
the Taylor series of $\sqrt{\sigma^2 + 4 \sigma x}$ at degree $2m-1$ i.e.,
$$p(x)=\sum_{n=0}^{2m-1} (-1)^n \frac{(2n)!}{(n!)^2(1-2n)\sigma^{n-1}}x^n.$$
When we square $p(x)$ we get a polynomial whose terms of degree at most $2m-1$ 
are exactly the first $2m-1$ terms of $\sigma^2 + 4 \sigma x$,
and by checking the signs of each of the coefficients of $p(x)$ we can see 
that the remaining terms of $p(x)^2$ will be negative for odd powers
and positive for even powers. Composing $p$ with $f$ we get
$$(p(f(\xx)))^2 = \sigma^2 + 4 \sigma f(\xx) + \sum_{i=0}^{m-1} a_i f(\xx)^{2m+2i} - \sum_{i=0}^{m-2} b_i f(\xx)^{2m+2i+1}$$
where the $a_i$ and $b_i$ are positive numbers. In particular
$$\sigma^2 + 4 \sigma f(\xx) = p(f(\xx))^2 + \sum_{i=0}^{m-1} a_i f(\xx)^{2i}(-f(\xx)^{2m}) + \sum_{i=0}^{m-2} b_i f(\xx)^{2m+2i+1}.$$
On the right hand side of this equality the only term who is not immediately 
a sum of squares is the last one, but by the above remark,
since $2m+2i+1>m$, by adding an arbitrarily small positive number, it becomes $(d(2m+2i+1)+k)$-sos modulo $I$. 
By checking the degrees throughout the sum, one can see that for 
any $\xi > 0$, $\sigma^2 + 4 \sigma f(\xx) + \xi$ is $(4dm+k)$-sos modulo $I$.
Since $\sigma$ and $\xi$ are arbitrary positive numbers we get the desired result.
\end{proof}

Lemma~\ref{lemma:epsilon}, together with the Real Nullstellensatz, gives us an important relationship 
between the theta body hierarchy of an ideal and that of its real radical.

\begin{theorem}\label{thm:theta_inclusion}
Fix an ideal $I$. Then, there exists a function $\varPsi:\NN\rightarrow \NN$ such that 
$\TH_{\varPsi(k)} \subseteq \TH_k(\sqrt[\RR]{I})$ for all $k$.
\end{theorem}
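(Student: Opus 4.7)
The plan is to show that any linear polynomial which is $k$-sos modulo $\sqrt[\RR]{I}$ is, up to adding an arbitrarily small positive constant, $\varPsi(k)$-sos modulo $I$ for a suitable function $\varPsi$. Since the definition of $\TH_k$ is via the nonnegativity of $k$-sos linear polynomials, this yields the desired inclusion.

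The main steps are as follows. Let $l \in \RR[\xx]_1$ be $k$-sos modulo $\sqrt[\RR]{I}$, so that $l \equiv \sum_{i=1}^r g_i^2 \bmod \sqrt[\RR]{I}$ with each $g_i \in \RR[\xx]_k$. Setting $h := l - \sum g_i^2$, we have $h \in \sqrt[\RR]{I}$ and $\deg(h) \leq 2k$. By the bounded degree Real Nullstellensatz (Theorem \ref{thm:real_nullstellensatz}), there exist $m \in \NN$ and polynomials $q_j \in \RR[\xx]$ with $m, \deg(q_j) \leq \varGamma(2k)$ such that $h^{2m} + \sum_j q_j^2 \in I$. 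Consequently $-h^{2m} \equiv \sum q_j^2 \bmod I$ is $\varGamma(2k)$-sos modulo $I$.

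Now I would apply Lemma \ref{lemma:epsilon} to $h$ with $d = \deg(h) \leq 2k$ and with the sos-degree bound $\varGamma(2k)$: for every $\varepsilon > 0$ the polynomial $h + \varepsilon$ is $\bigl(\varGamma(2k) + 4\cdot 2k \cdot \varGamma(2k)\bigr)$-sos modulo $I$, i.e.\ $(1+8k)\varGamma(2k)$-sos modulo $I$. Writing
\[
l + \varepsilon = \sum_{i=1}^r g_i^2 + (h + \varepsilon),
\]
and noting that the first summand is $k$-sos modulo $I$, we conclude that $l+\varepsilon$ is $\varPsi(k)$-sos modulo $I$ where
\[
\varPsi(k) := \max\bigl\{k,\ (1+8k)\varGamma(2k)\bigr\}.
\]
Finally, for any $\pp \in \TH_{\varPsi(k)}(I)$, the definition of theta bodies gives $(l+\varepsilon)(\pp) \geq 0$ for every $\varepsilon > 0$, so $l(\pp) \geq 0$. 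Since $l$ was an arbitrary $k$-sos (mod $\sqrt[\RR]{I}$) linear polynomial, we obtain $\pp \in \TH_k(\sqrt[\RR]{I})$.

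The main obstacle is conceptual rather than computational: sos polynomials modulo a non real radical $I$ need not form a closed cone, so we cannot directly transfer the sos certificate for $l$ mod $\sqrt[\RR]{I}$ to one mod $I$. The trick is to trade an arbitrarily small additive perturbation $\varepsilon$ for the ability to quantitatively bound the degree blow-up, using the Real Nullstellensatz to control $h$ and Lemma \ref{lemma:epsilon} to convert the certificate $-h^{2m} \in \Sigma_{\varGamma(2k)}(I)$ into the sos certificate for $h+\varepsilon$. The degree bound $\varPsi(k)$ then assembles automatically from $\varGamma$ and the linear polynomial in $k$ coming from Lemma \ref{lemma:epsilon}.
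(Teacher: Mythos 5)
Your proposal is correct and follows essentially the same route as the paper's proof: decompose $l$ as $\sum g_i^2 + h$ with $h \in \sqrt[\RR]{I}$, invoke the bounded-degree Real Nullstellensatz to obtain $-h^{2m} \in \Sigma_{\varGamma(2k)}(I)$, apply Lemma~\ref{lemma:epsilon} to convert this into a degree-bounded sos certificate for $h + \varepsilon$, and let $\varepsilon \to 0$ using closedness of $\TH_{\varPsi(k)}(I)$. Your explicit $\max\{k,(1+8k)\varGamma(2k)\}$ is a minor (and harmless) refinement of the paper's $\varGamma(2k)+8k\varGamma(2k)$, needed to absorb the $k$-sos term $\sum g_i^2$; the paper elides this because that quantity already dominates $k$.
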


\begin{proof}
Fix some $k$, and let $f(\xx)$ be a linear polynomial that is 
$k$-sos modulo $\sqrt[\RR]{I}$. This means that there exists some sum of squares
$s(\xx) \in \RR[\xx]_{2k}$ such that $f-s \in \sqrt[\RR]{I}$. 
Therefore, by the Real Nullstellensatz (Theorem~\ref{thm:real_nullstellensatz}), 
$-(f-s)^{2m} \in \Sigma_l(I)$ for $l,m \leq \varGamma(2k)$, where 
$\varGamma:\NN\rightarrow \NN$ depends only on the ideal $I$.
By Lemma~\ref{lemma:epsilon} it follows that $f-s+\varepsilon$ is 
$\varGamma(2k) + 8k \varGamma(2k)$-sos modulo $I$ for every $\varepsilon > 0$. 
Let $\varPsi(k) := \varGamma(2k) + 8k \varGamma(2k)$. 
Then we have that $f+\varepsilon$ is $\varPsi(k)$-sos modulo $I$
for all $\varepsilon > 0$. This means that for every $\varepsilon > 0$, the inequality 
$f+\varepsilon \geq 0$ is valid on $\TH_{\varPsi(k)}(I)$ for $f$ linear and 
$k$-sos modulo $\sqrt[\RR]{I}$. Therefore, $f \geq 0$ is also valid on 
$\TH_{\varPsi(k)}(I)$, and hence, 
$\TH_{\varPsi(k)}(I) \subseteq \TH_{k}(\sqrt[\RR]{I})$.
\end{proof}

Note that the function $\varPsi$ whose existence we just proved, 
can be notoriously bad in practice, as it can be much higher than necessary.
The best theoretical bounds on $\varPsi$ come from quantifier elimination 
and so increase very fast. However, if we are only interested in 
convergence of the theta body sequence, as is often the case, 
Theorem~\ref{thm:theta_inclusion} tells us that we might as well assume that 
our ideals are real radical. 

%%%%%%%%%%%%%%%%%%%%%%%%%%%%%%%%%%%%%%%%%%%%%%%%%%%%%%%%%%%%%%%%%%%%%%%%%%%%%%%%%%%%%%%%%%%%%%%%%%%%%%%%%%%%%%%%%%%%%%%%%%%%%%%%%%

\section{Computing theta bodies}
In this section we illustrate the computation of theta bodies on two examples. In the first example, $\V_{\RR}(I)$ is finite and hence $\conv(\V_\RR(I))$ is a polytope, while in the second example $\V_{\RR}(I)$ is infinite. Convex approximations of polytopes via linear or semidefinite programming have received a great deal of attention in combinatorial optimization where  the typical problem is to maximize a linear function $\cc \cdot \xx$ as $\xx$ varies over the characteristic vectors $\chi^T \in \{0,1\}^n$ of some combinatorial objects $T$. Since this discrete optimization problem is equivalent to the linear program in which one maximizes $\cc \cdot \xx$ over all $\xx \in \conv(\{\chi^T\})$ and $\conv(\{\chi^T\})$ is usually unavailable, one resorts to approximations of this polytope over which one can optimize in a reasonable way. A combinatorial optimization problem that has been tested heavily in this context is the \emph{maximum stable set problem} in a graph which we use as our first example. 
In \cite{ShannonCapacity}, Lov{\'a}sz constructed the \emph{theta body of a graph} which was the first example of a semidefinite programming relaxation of a combinatorial optimization problem. The hierarchy of theta bodies for an arbitrary polynomial ideal are a natural generalization of Lov{\'a}sz's theta body for a graph, which explains their name. Our work on theta bodies was initiated by two problems that were posed by Lov{\'a}sz in \cite[Problems 8.3 and 8.4]{Lovasz} suggesting this generalization and its properties.

\subsection{The maximum stable set problem} \label{sec:stableset}

Let $G=([n],E)$ be an undirected graph with vertex set
$[n]=\{1,\ldots,n\}$ and edge set $E$. A \emph{stable set} in $G$ is a
set $U \subseteq [n]$ such that for all $i,j \in U$, $\{i,j\} \not \in
E$. The maximum stable set problem seeks the stable set of largest
cardinality in $G$, the size of which is the \emph{stability number} of
$G$, denoted as $\alpha(G)$. 

The maximum stable set problem can be modeled as follows. For each
stable set $U \subseteq [n]$, let $\chi^U \in \{0,1\}^n$ be its \emph{characteristic vector} defined as $(\chi^U)_i = 1$ if $i \in U$ and
$(\chi^U)_i = 0$ otherwise.  Let $S_G \subseteq \{0,1\}^n$ be the set
of characteristic vectors of all stable sets in $G$. Then
$\textup{STAB}(G) := \conv(S_G)$ is called the \emph{stable set
  polytope} of $G$ and the maximum stable set problem is, in theory,
the linear program $\textup{max}\{ \sum_{i=1}^{n} x_i \,:\, \xx \in
\textup{STAB}(G) \}$ with optimal value $\alpha(G)$. However,
$\textup{STAB}(G)$ is not known apriori, and so one resorts to
relaxations of it over which one can optimize $\sum_{i=1}^{n} x_i$.

In order to compute theta bodies for this example, we first need to view 
$S_G$ as the real variety of an ideal. The natural ideal to take is, $\I(S_G)$, the vanishing ideal of 
$S_G$. It can be 
checked that this real radical ideal is 
$$I_G := \langle x_i^2 - x_i \,\forall \,\,i \in [n], \,\, x_ix_j \, \forall \,\, \{i,j\} \in E \rangle \subset \RR[x_1, \ldots, x_n].$$
For $U \subseteq [n]$, let $\xx^U := \prod_{i \in U} x_i$. From the
generators of $I_G$ it follows that if $f \in \RR[\xx]$, then $f
\equiv g$ mod $I_G$ where $g$ is in the $\RR$-span of the set of
monomials $\{ \xx^U \,:\, U \textup{ is a stable set in } G \}$. Check
that $$\B := \{\xx^U + I_G \,:\, U \,\,\textup{stable set in} \,\, G\}$$
is a $\theta$-basis of $\RR[\xx]/I_G$ containing $1+I_G, x_1 + I_G, \ldots, x_n
+ I_G$. This implies that $\B_k = \{ \xx^U + I_G \,:\, U \,\,\textup{stable set in} \,\, G, \,\,|U| \leq k \}$, and 
for $\xx^{U_i}+I_G, \xx^{U_j} + I_G \in \B_k$, their product is $\xx^{U_i \cup U_j} + I_G$ which is $0+I_G$ if 
$U_i \cup U_j$ is not a stable set in $G$.  This product formula allows us to compute $M_{\B_k}(\yy)$ 
where we index the element $\xx^U + I_G \in \B_k$ by the set $U$. Since 
$S_G \subseteq \{0,1\}^n$ and $I_G = \I(S_G)$ is real radical, by Corollary~\ref{cor:01}, we have that 

$$\TH_k(I_G) = \left\{ \yy \in \RR^n \,:\,
\begin{array}{l} 
  \exists \, M \succeq 0, \, M \in \RR^{|\B_k| \times |\B_k|}
  \,\textup{such that} \\ 
  M_{\emptyset \emptyset} = 1,\\
  M_{\emptyset \{i\}} = M_{\{i\} \emptyset} = M_{\{i\} \{i\}} = y_i \\
  M_{U U'} = 0 \,\,\textup{if} \,\,U \cup U' \,\, \textup{is not stable
    in}  \,\, G\\
  M_{U U'} = M_{W W'} \,\,\textup{if} \,\, U \cup U' = W \cup W' 
\end{array}
\right \}.$$ 
In particular,  indexing the one element stable sets by the vertices
of $G$,
$$\textup{TH}_1(I_G) = \left\{ \yy \in \RR^n \,:\,
\begin{array}{l} 
\exists \, M \succeq 0, M \in \RR^{(n+1) \times (n+1)}\,\textup{such that} \\ 
M_{00} = 1,\\
M_{0i} = M_{i0} = M_{ii} =  y_i \,\,\forall\,\,i \in [n]\\
M_{ij} = 0 \,\,\forall \,\, \{i,j\} \in E
\end{array}
\right \}.$$

\begin{example}
Let $G =(\{1,2,3,4,5\},E)$ be a pentagon. Then 
$$I_G = \langle x_i^2-x_i \,\,\forall \, i =1, \ldots, 5, \,\, x_1x_2, x_2x_3, x_3x_4, x_4x_5, x_1x_5 \rangle$$
and $$\B = \{1, x_1, x_2, x_3, x_4, x_5, x_1x_3, x_1x_4, x_2x_4, x_2x_5, x_3x_5 \} + I_G.$$
Let $\yy \in \RR^{11}$ be a vector whose coordinates are indexed by the elements of $\B$ in the given order. 
Then 
$$\textup{TH}_1(I_G) = 
\left\{ \yy \in \RR^5 \,:\, \exists \,\, y_6, \ldots, y_{10} \textup{ s.t. }
\left ( \begin{array}{cccccc}
1 & y_1 & y_2 & y_3 & y_4 & y_5 \\
y_1 & y_1 & 0 & y_6 & y_7 & 0 \\
y_2 & 0 & y_2 & 0 & y_8 & y_9 \\
y_3 & y_6 & 0 & y_3 & 0 & y_{10} \\
y_4 & y_7 & y_8 & 0 & y_4 & 0 \\
y_5 & 0 & y_9 & y_{10} & 0 & y_5 
\end{array} \right ) \succeq 0 
\right \}, 
$$
and $\textup{TH}_2(I_G) =$
$$
\left\{ \yy \in \RR^5 \,:\, \exists \,\, y_6, \ldots, y_{10} \textup{ s.t. }
\left ( \begin{array}{ccccccccccc}
1 & y_1 & y_2 & y_3 & y_4 & y_5 & y_6 & y_7 & y_8 & y_9 & y_{10}\\
y_1 & y_1 & 0 & y_6 & y_7 & 0 & y_6 & y_7 & 0 & 0 & 0 \\
y_2 & 0 & y_2 & 0 & y_8 & y_9 & 0 & 0 & y_8 & y_9 & 0 \\
y_3 & y_6 & 0 & y_3 & 0 & y_{10} & y_6  & 0 & 0 & 0 & y_{10}\\
y_4 & y_7 & y_8 & 0 & y_4 & 0 & 0 & y_7 & y_8 & 0 & 0 \\
y_5 & 0 & y_9 & y_{10} & 0 & y_5 & 0 & 0 & 0 & y_9 & y_{10}\\
y_6 &  y_6 & 0 & y_6 & 0 & 0 & y_6 & 0 & 0 & 0 & 0 \\
y_7 &  y_7 & 0 &  0 & y_7 & 0 & 0 & y_7 & 0 & 0 & 0 \\
y_8 &  0 & y_8 &  0 & y_8 & 0 & 0 & 0 & y_8 & 0 & 0 \\
y_9 &  0 & y_9 &  0 & 0 & y_9 & 0 & 0 & 0 & y_9 & 0 \\
y_{10} & 0 & 0 &  y_{10} & 0 & y_{10} & 0 & 0 & 0 & 0 & y_{10}
\end{array} \right ) \succeq 0 
\right \}.
$$
It will follow from Proposition~\ref{prop:odd_cycle} below that $$\textup{STAB}(G) = \TH_2(I_G) \subsetneq \TH_1(I_G).$$
In general, it is a non-trivial task to decide whether two convex bodies coincide and thus to check whether a given theta body, $\TH_k(I)$, equals 
$\overline{\conv(\V_{\RR}(I))}$. One technique is to show that all linear functions $l(\xx)$ such that $l(\xx) \geq 0$ on $\V_{\RR}(I)$ are 
$k$-sos mod $I$. Two illustrations of this method appear shortly.
\end{example}

The first theta body of $I_G$ is exactly the theta body, $\TH(G)$, of $G$ as defined by Lov{\'a}sz 
\cite[Lemma 2.17]{LovaszSchrijver91}. The higher theta bodies 
$\TH_k(I_G)$ shown above give a nested sequence of convex relaxations 
of $\textup{STAB}(G)$ extending Lov{\'a}sz's $\TH(G)$. 
The problem $$\textup{max}\{ \sum_{i=1}^{n} x_i \,:\, \xx \in
\textup{TH}(G) \}$$ can be solved to arbitrary precision
in polynomial time in the size of $G$ via semidefinite programming. 
The optimal value of this semidefinite program
is called the \emph{theta number} of $G$ and provides an upper
bound on $\alpha(G)$. See \cite[Chap. 9]{GLS} and
\cite{BruceShepherd} for more on the stable set problem and
$\textup{TH}(G)$.  The body $\textup{TH}(G)$ was the first example of
a semidefinite programming relaxation of a discrete optimization problem and snowballed the
use of semidefinite programming in combinatorial optimization. See \cite{LaurentRendl,
  Lovasz} for surveys. 
  
Recall that a graph $G$ is \emph{perfect} if and
only if $G$ has no induced odd cycles of length at least five, or their
complements. Lov{\'a}sz showed that $\textup{STAB}(G) =
\textup{TH}(G)$ if and only if $G$ is perfect. This equality shows
that the maximum stable set problem can be solved in polynomial time
in the size of $G$ when $G$ is a perfect graph, and this geometric
proof is the only one known for this complexity result. 
Since a pentagon is not 
perfect, it follows that for $G$ a pentagon, $\textup{STAB}(G) \subsetneq \TH_1(I_G)$. 
We will see in Corollary~\ref{cor:finite_real_variety} that when 
$\V_{\RR}(I)$ is finite, 
then there is some finite $k$ for which 
$\TH_k(I) = \conv(\V_\RR(I))$. Since 
no monomial in the basis $\B$ of $\RR[\xx]/I_G$ has degree
larger than $\alpha(G)$, for any $G$, 
$\textup{STAB}(G) = \textup{TH}_{\alpha(G)}(I_G)$ which proves that 
when $G$ is a pentagon, $\textup{STAB}(G) = \textup{TH}_{2}(I_G)$.
We now prove a second, general reason why $\textup{STAB}(G) = \textup{TH}_{2}(I_G)$
when $G$ is a pentagon.

A simple class of linear inequalities that are valid on $\textup{STAB}(G)$ are the \emph{odd cycle inequalties}, 
$\sum_{i \in C} x_i \leq \frac{|C|-1}{2}$, where $C$ is an odd cycle in $G$.
 
\begin{proposition} \label{prop:odd_cycle}
For any graph $G$, all odd cycle inequalities are valid on $\TH_2(I_G)$. 
\end{proposition}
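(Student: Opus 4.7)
The plan is to exhibit, for any odd cycle $C = (v_1, \ldots, v_{2k+1})$ in $G$, polynomials $g_1, \ldots, g_r \in \RR[\xx]_2$ with $\sum_j g_j^2 \equiv k - \sum_{v \in V(C)} x_v \pmod{I_G}$. Producing such a sum-of-squares certificate suffices by Definition~\ref{def:theta_body}: it shows the linear polynomial $k - \sum_v x_v$ is $k$-sos modulo $I_G$ for $k=2$ and hence nonnegative on $\TH_2(I_G)$.

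A first simplification is the reduction to the pure-cycle case $G = C_{2k+1}$. The ideal $I_{C_{2k+1}}$ generated by $\{x_v^2 - x_v : v \in V(C)\} \cup \{x_{v_i}x_{v_{i+1}} : i \in \ZZ/(2k+1)\}$ is contained in $I_G$, since all of its generators already lie in $I_G$. Any $2$-sos certificate valid modulo $I_{C_{2k+1}}$ therefore pulls back to one valid modulo $I_G$, and from now on I may assume $G$ is the cycle itself.

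The central algebraic ingredient is the family of edge-slack polynomials $p_i := 1 - x_{v_i} - x_{v_{i+1}}$. Using $x_v^2 \equiv x_v$ and $x_{v_i}x_{v_{i+1}} \equiv 0$ modulo $I_{C_{2k+1}}$, a short expansion gives $p_i^2 \equiv p_i$, and consequently $(p_ip_j)^2 \equiv p_ip_j$; each degree-$2$ product $p_ip_j$ is therefore its own square root and hence $2$-sos. Writing $\phi := \sum_{v\in V(C)} x_v$, the identity $\sum_i p_i = (2k+1) - 2\phi$ combined with idempotency of the $p_i$ yields the key algebraic identity
$$\Big(\sum_i p_i\Big)\Big(\sum_i p_i - 1\Big) \;\equiv\; 2\sum_{i<j} p_ip_j \pmod{I_{C_{2k+1}}}.$$
Substituting $\sum_i p_i = 2(k-\phi)+1$ rewrites the left-hand side as $2(k-\phi)\cdot \sum_i p_i$, exhibiting the product $(k-\phi)\cdot(\sum_i p_i)$ as a $2$-sos expression modulo $I_{C_{2k+1}}$.

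The hardest and final step is to upgrade this product-level certificate into one for the single factor $k-\phi$ while staying inside $\RR[\xx]_2$. Pointwise this is already settled, since $\sum_i p_i = 2(k-\phi)+1$ is a positive odd integer on $\V_\RR(I_{C_{2k+1}})$; the remaining question is whether the inequality admits a degree-$2$ square representation. In the pentagon case $k=2$, direct interpolation in the $\theta$-basis $\B_2$---which already spans all functions on $\V_\RR(I_{C_5})$---produces a single square: the degree-$2$ polynomial $g = \sqrt{2} + (1-\sqrt{2})\phi + (\sqrt{2}-2)\tau$, with $\tau = \sum_{\{i,j\}\notin E(C_5)} x_{v_i}x_{v_j}$, satisfies $g^2 \equiv 2 - \phi$, as one verifies by evaluating on the three orbits of stable sets. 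For longer odd cycles $\B_2$ no longer spans all functions on $\V_\RR(I_{C_{2k+1}})$, so the certificate must combine several degree-$2$ polynomials. A natural candidate family pairs the $p_ip_j$ above with the matching-slacks $\tilde p_i := \sum_{e \in M_i} p_e$, where $M_i$ is the unique near-perfect matching of $C$ leaving $v_i$ uncovered (so $\tilde p_i \equiv k - \phi + x_{v_i}$ on the variety); combining their squares and cross-terms via the displayed identity is expected to yield the required decomposition, and making this combinatorial assembly precise is the main technical content of the proof.
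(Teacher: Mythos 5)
Your proof is incomplete at precisely the step you flag as ``the main technical content.'' You correctly reduce to the cycle $C_{2k+1}$, correctly observe that the edge slacks $p_i = 1-x_{v_i}-x_{v_{i+1}}$ are idempotent modulo $I_{C_{2k+1}}$, and correctly derive that $(k-\phi)\cdot\big(\sum_i p_i\big)$ is $2$-sos. But this does not by itself show that $k-\phi$ is $2$-sos: one cannot in general ``cancel'' a factor from a sum-of-squares certificate, even when that factor (here $\sum_i p_i = 2(k-\phi)+1$) is positive on the variety, and you never actually construct the desired certificate for $k-\phi$ beyond the pentagon. Your $k=2$ interpolation $g = \sqrt{2}+(1-\sqrt{2})\phi+(\sqrt{2}-2)\tau$ does check out because $\RR[\xx]_2/I_{C_5}$ exhausts $\RR[\xx]/I_{C_5}$, but you yourself note this degenerates for longer cycles, and the proposed ``matching-slack'' assembly is stated only as an expectation.

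The paper closes the gap with a different, fully explicit, and less symmetric construction. Fix the vertex $v_1$. For $i=1,\dots,k$ set $p_i := (1-x_1)(1-x_{2i}-x_{2i+1})$ (products of the idempotent $1-x_1$ with slacks of the near-perfect matching missing $v_1$), and for $i=1,\dots,k-1$ set $g_i := x_1(1-x_{2i+1}-x_{2i+2})$ (products of the idempotent $x_1$ with slacks of the interior matching). Each $p_i$ and $g_i$ is idempotent mod $I_{C_{2k+1}}$, and a direct telescoping computation gives
\[
\sum_{i=1}^{k} p_i^2 + \sum_{i=1}^{k-1} g_i^2 \;\equiv\; k - \sum_{i=1}^{2k+1} x_i \pmod{I_{C_{2k+1}}},
\]
an explicit degree-$2$ sos certificate. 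The key trick you are missing is to anchor all the products at a single vertex and split the interpolation between the idempotents $1-x_1$ and $x_1$, which makes the cross-terms cancel exactly. Until your combinatorial assembly is made precise along these or similar lines, the proposal does not prove the proposition.
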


\begin{proof} 
  Let $n = 2k+1$ and $C$ be an $n$-cycle. Then $I_C = \langle
  x_i^2-x_i, \,\, x_ix_{i+1} \,\,\forall \,\, i \in [n] \rangle$ where
  $x_{n+1} = x_1$.  Therefore, $(1-x_i)^2 \equiv 1-x_i$ and
  $(1-x_i-x_{i+1})^2 \equiv 1 -x_i-x_{i+1} \,\,\textup{mod}\,\,I_C$.
  This implies that, mod $I_C$,
$$p_i^2:=((1-x_1)(1-x_{2i}-x_{2i+1}))^2 \equiv p_i = 1 - x_1
-x_{2i}-x_{2i+1} + x_1x_{2i} + x_1x_{2i+1}.$$
Summing over $i=1,..,k$, we get 
$$\sum_{i=1}^{k} p_i^2 \equiv k - kx_1 -\sum_{i=2}^{2k+1}x_i +
\sum_{i=3}^{2k} x_1x_i \,\,\textup{mod}\,\,I_C$$
since $x_1x_2$ and $x_1x_{2k+1}$ lie in 
$I_C$.  Define $g_i := x_1(1-x_{2i+1}-x_{2i+2})$. Then $g_i^2 - g_i
\in I_C$ and mod $I_C$ we get that
$$\sum_{i=1}^{k-1} g_i^2 \equiv (k-1)x_1 - \sum_{i=3}^{2k} x_1x_i,
\,\,\textup{which implies} \,\, \sum_{i=1}^{k} p_i^2 +
\sum_{i=1}^{k-1} g_i^2 \equiv k - \sum_{i=1}^{2k+1}x_i.$$ 
Thus, $k - \sum_{i=1}^{2k+1}x_i$ is $2$-sos mod $I_C$. 

Now let $G$ be any graph and $C$ be an induced $(2k+1)$-cycle in $G$. 
Then since $I_C \subseteq I_G$, $k - \sum_{i=1}^{2k+1}x_i$ is $2$-sos mod $I_G$ 
which proves the result.
\end{proof}

\begin{corollary} \label{cor:odd_cycle_theta2}
If $G$ is an odd cycle of length at least five, $\textup{STAB}(G) = \TH_2(I_G)$. 
\end{corollary}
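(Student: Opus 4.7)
The plan is to combine Proposition~\ref{prop:odd_cycle} with the known facet description of the stable set polytope of an odd cycle. Recall that for $G = C_{2k+1}$ with $2k+1 \geq 5$, the polytope $\textup{STAB}(G)$ is cut out by the three classes of linear inequalities
\[
x_i \geq 0,\qquad 1 - x_i - x_j \geq 0 \ \text{for}\ \{i,j\}\in E,\qquad k - \sum_{i=1}^{2k+1} x_i \geq 0,
\]
the last one being the unique odd-cycle (odd-hole) inequality on all vertices of $C_{2k+1}$. Since $\textup{STAB}(G) \subseteq \TH_2(I_G)$ is automatic, it suffices to show that each of these inequalities is $2$-sos modulo $I_G$, so that it is valid on $\TH_2(I_G)$.

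For nonnegativity and edge inequalities, I would verify that they are actually $1$-sos mod $I_G$, using only the generators $x_i^2-x_i$ and $x_ix_j$ of $I_G$. Indeed, $x_i \equiv x_i^2$ mod $I_G$, and a direct expansion shows
\[
(1-x_i-x_j)^2 = 1 - 2x_i - 2x_j + x_i^2 + x_j^2 + 2x_ix_j \equiv 1 - x_i - x_j \pmod{I_G}
\]
for any edge $\{i,j\}$ (using $x_ix_j \in I_G$). Thus both $x_i$ and $1 - x_i - x_j$ are squares mod $I_G$, giving valid inequalities on $\TH_1(I_G) \supseteq \TH_2(I_G)$.

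For the odd-cycle inequality $k - \sum_{i=1}^{2k+1} x_i \geq 0$, Proposition~\ref{prop:odd_cycle} applied with $G = C_{2k+1}$ itself already exhibits an explicit $2$-sos certificate modulo $I_G$, so this inequality is valid on $\TH_2(I_G)$.

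Putting these together, every facet-defining inequality of $\textup{STAB}(C_{2k+1})$ is valid on $\TH_2(I_G)$, so $\TH_2(I_G) \subseteq \textup{STAB}(G)$, completing the equality. The only non-routine ingredient is Proposition~\ref{prop:odd_cycle}, which we have already proved; everything else amounts to recalling the classical facet description of $\textup{STAB}(C_{2k+1})$ and writing down trivial one-term sos certificates, so there is no significant obstacle remaining.
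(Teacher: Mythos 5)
Your proposal is correct and follows essentially the same route as the paper: both rely on the known facet description of $\textup{STAB}(C_{2k+1})$, verify that the nonnegativity and edge inequalities are (already $1$-)sos mod $I_G$ via $x_i\equiv x_i^2$ and $1-x_i-x_j\equiv(1-x_i-x_j)^2$, and invoke Proposition~\ref{prop:odd_cycle} for the odd-cycle inequality. The only cosmetic difference is in the concluding step: the paper phrases it via Farkas (any valid linear inequality on $\textup{STAB}(G)$ is a nonnegative combination of facets, hence $2$-sos), whereas you observe directly that all facet inequalities are valid on $\TH_2(I_G)$ and therefore $\TH_2(I_G)\subseteq\textup{STAB}(G)$; these are logically equivalent.
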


\begin{proof}
It is known that the facet inequalities of $\textup{STAB}(G)$ for $G$ an $n=2k+1$-cycle are:
$$x_i \geq 0 \,\,\forall \,\,i\in [n], \,\,\, 1 - x_i - x_{i+1}  \geq 0 \,\,\forall \,\,i \in [n],
\,\,\,\textup{ and } \,\,\, k - \sum_{i=1}^{n} x_i \geq 0$$
(see for instance,  \cite[Corollary 65.12a]{SchrijverB}). Clearly, $x_i$ is $2$-sos mod $I_G$ for all $i \in [n]$, and 
check that $1-x_i-x_j \equiv (1-x_i-x_j)^2$ mod $I_G$ for all $\{i,j\} \in E$. By Proposition~\ref{prop:odd_cycle}, 
$k - \sum_{i=1}^{n} x_i$ is 2-sos mod $I_G$. 
If $l(\xx) \in \RR[\xx]_1$ such that $l(\xx) \geq 0$ on $\textup{STAB}(G)$, then $l(\xx)$ is a nonnegative linear 
combination of the facet inequalities of $\textup{STAB}(G)$, and hence $l(\xx)$ is 2-sos mod $I_G$. 
\end{proof}

Using the same method as in Proposition~\ref{prop:odd_cycle}, one can show that the 
\emph{odd antihole} and \emph{odd wheel} inequalities \cite[Chap. 9]{GLS} 
that are valid for $\textup{STAB}(G)$ are
also valid for $\textup{TH}_2(I_G)$. Schoenebeck \cite{Schoenebeck}
has shown that there is no constant $k$ such that
$\textup{STAB}(G) = \textup{TH}_k(I_G)$ for all graphs $G$ (as
expected, unless P=NP).  However, no explicit family of graphs that
exhibit this behavior is known.

\subsection{An infinite variety} Consider the cardioid with defining equation $$(x_1^2 + x_2^2 + 2x_1)^2 = 4(x_1^2+x_2^2).$$
This plane curve is the real variety of the ideal $I = \langle h \rangle$ where $$h = x_1^4 + 2x_1^2x_2^2 + x_2^4 + 4x_1^3 + 4x_1x_2^2 - 4x_2^2.$$
Points on this variety are parametrized by the angle $\theta$ made by the line segment from the origin to the point, with the $x_1$-axis, by the equations: $$x_1(\theta) = 2 cos \, \theta \, (1 - cos \, \theta), \,\,\,x_2(\theta) = 2 sin \, \theta \, (1 - cos \, \theta).$$
It can be checked that the half space containing the origin defined by the tangent to the cardiod at $(x_1(\theta), x_2(\theta))$ is 
$$x_1 \frac{sin \, 2 \theta + sin \, \theta}{2 cos^2 \, \theta + cos \, \theta - 1} - x_2 + \frac{sin \, 2 \theta - 2 sin \, \theta}{2 cos^2 \, \theta + cos \, \theta - 1} \geq 0.$$

We now compute theta bodies of this ideal. Since the defining ideal of the cardiod is generated by a quartic polynomial, no linear polynomial is a sum of squares of linear polynomials modulo this ideal. Therefore, $\TH_1(I) = \RR^2$. A $\RR$-vector space basis $\B$ of $\RR[x_1,x_2]/I$ is given by the monomials in $x_1$ and $x_2$ that are not divisible by $x_1^4$. In particular, 
$$\B_4 = \{ 1, x_1, x_2, x_1^2, x_1x_2, x_2^2, x_1^3, x_1^2x_2, x_1x_2^2, x_2^3, x_1^3x_2, x_1^2x_2^2, x_1x_2^3, x_2^4 \} + I,$$ and suppose the coordinates of $\yy \in \RR^{\B_4}$ are indexed as follows ($y_0 = 1$):
$$\begin{array}{c|c|c|c|c|c|c|c|c|c|c|c|c|c}
1 &  x_1 &  x_2 &  x_1^2 &  x_1x_2 &  x_2^2 &  x_1^3 &  x_1^2x_2 &  x_1x_2^2 &  x_2^3 & x_1^3x_2 &  x_1^2x_2^2 &  x_1x_2^3 &  x_2^4 \\
\hline
 1 & y_1 & y_2 & y_3 & y_4 & y_5 & y_6 & y_7 & y_8 & y_9 & y_{10} & y_{11} & y_{12} & y_{13}
\end{array}$$
To compute $M_{\B_2}(\yy)$ we need to express $f_i f_j + I$ as a linear combination of elements in $\B_4$ for every $f_i+I, f_j + I \in \B_2$  and then linearize this linear combination using $\yy$. For example, 
$x_1^4 + I = -2x_1^2x_2^2 - x_2^4 - 4x_1^3 - 4x_1x_2^2 + 4x_2^2 + I$ which linearizes to 
$$T := -2y_{11} - y_{13} - 4y_6 - 4y_8 + 4y_5.$$ Doing all these computations shows that $$\TH_2(I)=
\left \{ (y_1,y_2)\,:\, \exists \yy \in \RR^{13} \textup{ s.t. }
\left ( \begin{array}{cccccc}
1 & y_1 & y_2 & y_3 & y_4 & y_5 \\
y_1 & y_3 & y_4 & y_6 & y_7 & y_8 \\
y_2 & y_4 & y_5 & y_7 & y_8 & y_9 \\
y_3 & y_6 & y_7 & T & y_{10} & y_{11} \\
y_4 & y_7 & y_8 & y_{10} & y_{11} & y_{12} \\
y_5 & y_8 & y_9 & y_{11} & y_{12} & y_{13} 
\end{array} \right ) \succeq 0
\right \} 
$$
For a given $\theta$ we can find the maximum $t\in \RR$ such 
that $(t \cos(\theta),t \sin(\theta))$ is in $\TH_2(I)$, using an SDP-solver.
This point will be on the boundary of $\TH_2(I)$, and if we vary $\theta$, 
we will trace that boundary. In Fig. \ref{fig:cardioid} we show
the result obtained by repeating this procedure over $720$ equally spaced 
directions and solving this problem using SeDuMi 1.1R3 \cite{sedumi}.

\begin{figure}[ht]
\begin{center}
\hfill 
\includegraphics[scale=0.3]{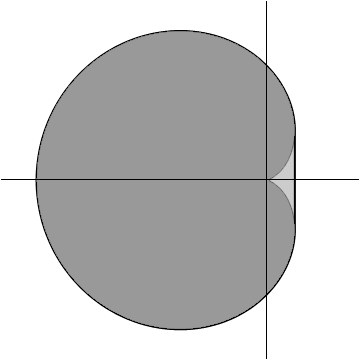}\hfill\
\caption{\small $\TH_2(I)$ compared with the cardioid.} \label{fig:cardioid}
\end{center}
\end{figure}

From the figure, $\TH_2(I)$ seems to match the convex hull of the cardiod, 
suggesting that $\TH_2(I)=\conv(\V_{\RR}(I))$. To prove this equality, we 
would have to construct a sos representation modulo the ideal for each of the tangents to the cardiod.
Independently of that, it is clear from the figure that $\TH_2(I)$ does a very good job of approximating 
this convex hull.

We now explain how one can also optimize a linear function over a theta body using the original sos definition of the body.
For a vector $\cc \in \RR^n$, maximizing $\cc \cdot \xx$ over $\TH_k(I)$ is the same as minimizing $\lambda \in \RR$
such that $\lambda - \cc \cdot \xx$ is nonnegative on $\TH_k(I)$. Under some mild assumptions such as 
compactness of $\TH_k(I)$ or $I$ being real radical, this is precisely the same as minimizing $\lambda \in \RR$
such that $\lambda - \cc \cdot \xx$ is $k$-sos modulo $I$.

Consider the previous example of the cardioid and let $\cc=(1,1)$. If we want to optimize in that direction over $\TH_2(I)$,
we need to minimize $\lambda$ such that $\ell_{\lambda}(x_1,x_2):=\lambda - x_1 - x_2$ is $2$-sos modulo $I$. 
If $\ff_2(\xx) := (1 \,\,x_1 \,\,x_2 \,\,x_1^2 \,\,x_1x_2 \,\,x_2^2)^T$,
then $\ell_{\lambda}(x_1,x_2)$ is $2$-sos modulo $I$ if and only if there exists a $6$ by $6$ positive semidefinite matrix $A$ such that
$$\ell_{\lambda}(x_1,x_2) \equiv \ff_2(\xx)^T A \ff_2(\xx) \,\,\textup{ mod } I.$$
By carrying out the multiplications and doing the simplifications using the ideal one gets that this is equivalent to finding
$A \succeq 0$ such that 
$$\begin{array}{rl}
\ell_{\lambda}(x_1,x_2) = & A_{11}+2A_{12}x_1 + 2A_{13}x_2 + (2A_{14}+A_{22})x_1^2 \\ 
& \,\,\,+ (2A_{15}+2A_{23})x_1x_2 +(2A_{16}+A_{33}+4A_{44})x_2^2 \\
& \,\,\,+ (2A_{24}-4A_{44})x_1^3 + (2A_{25}+2A_{34})x_1^2x_2  \\
& \,\,\,+ (2A_{26}+2A{35}-4A_{44})x_1x_2^2 + 2A_{36}x_2^3 + 2A_{45}x_1^3x_2 \\
& \,\,\,+ (2A_{46}+A_{55}-2A_{44})x_1^2x_2^2 + 2A_{56}x_1x_2^3 + (A_{66}-A_{44})x_2^4
\end{array}$$
so our problem is minimizing $\lambda$ such that there exists $A \succeq 0$ verifying
$$
\begin{array}{l}
A_{11}=\lambda;\\
A_{12}=A_{13}=1/2;\\
2A_{14}+A_{22}=2A_{15}+2A_{23}=2A_{16}+A_{33}+4A_{44}=2A_{24}-4A_{44}\\
\ \ =2A_{25}+2A_{34}=2A_{26}+2A{35}-4A_{44}=A_{36}=A_{45}\\
\ \ =2A_{46}+A_{55}-2A_{44}=A_{56}=A_{66}-A_{44}=0
\end{array}
$$

\begin{figure}[ht]
\begin{center}
\hfill 
\includegraphics[scale=0.4]{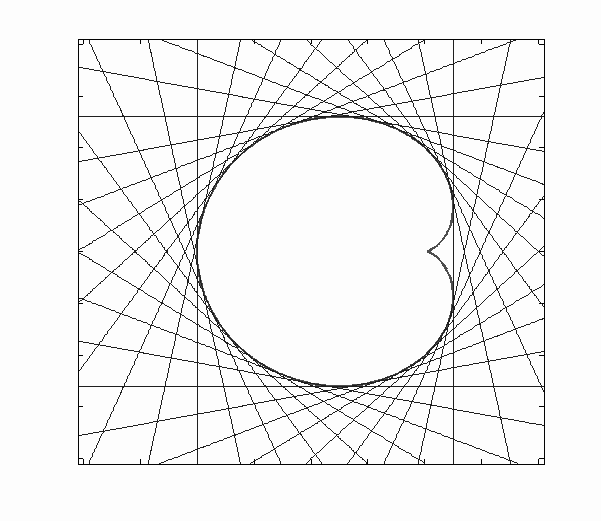}\hfill\
\caption{\small Outer contour of $\TH_2(I)$.} \label{fig:cardioid2}
\end{center}
\end{figure}

By taking $\cc = (\cos(\theta),\sin(\theta))$, varying $\theta$ and tracing all the optimal $\ell_{\lambda}$ obtained one can
get a contour of $\TH_2(I)$. In Fig.~\ref{fig:cardioid2} we can see all the $l_\lambda$'s obtained by repeating this process $32$ times
using SeDuMi.

%%%%%%%%%%%%%%%%%%%%%%%%%%%%%%%%%%%%%%%%%%%%%%%%%%%%%%%%%%%%%%%%%%%%%%%%%%%%%%%%%%%%%%%%%%%%%%%%%%%%%%%%%%%%%%%%%%

\section{Convergence and Exactness}

In any hierarchy of relaxations, one crucial question is 
that of convergence: under what conditions does the sequence
$\TH_k(I)$ approach $\cl(\conv(\V_{\RR}(I)))$? 
Another important question is whether the 
limit is actually reached in a finite number of steps. In this section we will 
give a brief overview of the known answers to these questions. 

In general, convergence is not assured. 
In fact, for the ideal $I=\left< x^3-y^2 \right>$, 
whose real variety is a cusp, the 
closure of $\conv(\V_{\RR}(I))$ is the half plane defined by $x \geq 0$. 
However, all the theta bodies of $I$ are just 
$\RR^2$. This follows immediately from 
the fact that no linear polynomial can ever be written as a 
sum of squares modulo this ideal.
Fortunately, many interesting ideals behave 
nicely with respect to the theta body hierarchy. 
In particular, the central result in this area 
tells us that compactness of $\V_\RR(I)$ implies 
convergence of the theta body hierarchy of $I$.

\begin{theorem}
For any ideal $I \subseteq \RR[\xx]$ such that $\V_{\RR}(I)$ is compact, 
$$\bigcap_{k=0}^{\infty}\TH_k(I)=\cl(\conv(\V_{\RR}(I))).$$
\end{theorem}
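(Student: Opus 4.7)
The inclusion $\cl(\conv(\V_\RR(I))) \subseteq \bigcap_k \TH_k(I)$ has already been noted in Section~2 (each $\TH_k(I)$ is a closed convex set containing $\V_\RR(I)$), so I would focus on the reverse inclusion and argue by contrapositive. Let $\pp \notin \cl(\conv(\V_\RR(I)))$. Compactness of $\V_\RR(I)$ forces $\conv(\V_\RR(I))$ to be compact, and in particular closed, so by the separation theorem there is a linear polynomial $\ell \in \RR[\xx]_1$ with $\ell(\pp) < 0$ while $\ell > 0$ on $\V_\RR(I)$. Continuity and compactness upgrade this to $\ell \geq \delta$ on $\V_\RR(I)$ for some $\delta > 0$. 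If I can produce an integer $k$ such that $\ell$ is $k$-sos modulo $I$, then by Definition~\ref{def:theta_body} we get $\ell \geq 0$ on $\TH_k(I)$, which together with $\ell(\pp) < 0$ yields $\pp \notin \TH_k(I)$ and hence $\pp \notin \bigcap_k \TH_k(I)$.

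To produce such a $k$, I would invoke Schm\"udgen's Positivstellensatz. Writing $I = \langle f_1, \ldots, f_t \rangle$, the real variety $\V_\RR(I)$ coincides with the basic closed semialgebraic set $\{f_1 \geq 0, -f_1 \geq 0, \ldots, f_t \geq 0, -f_t \geq 0\}$, which is compact by hypothesis. Schm\"udgen's theorem then expresses the strictly positive polynomial $\ell$ as an element of the preordering generated by $\pm f_1, \ldots, \pm f_t$:
$$\ell \;=\; \sum_{(S,\eta)} \sigma_{S,\eta} \prod_{i \in S} (\eta_i f_i),$$
where $S$ ranges over subsets of $\{1,\ldots,t\}$, $\eta$ ranges over $\{\pm 1\}^{|S|}$, and each $\sigma_{S,\eta}$ is a sum of squares. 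Every summand with $S \neq \emptyset$ has some $f_i$ as a factor and therefore lies in $I$, so reduction modulo $I$ kills all such terms and leaves $\ell \equiv \sigma_{\emptyset} \pmod{I}$. Taking $k$ to be the maximum degree appearing among the polynomials squared in a sos decomposition of $\sigma_{\emptyset}$ gives the desired $k$-sos certificate.

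The main obstacle is really just invoking Schm\"udgen's theorem with the correct packaging of the ideal generators $f_i$ as pairs of inequality constraints $f_i \geq 0$, $-f_i \geq 0$; once this is set up, the rest is bookkeeping, and notably this route uses neither Theorem~\ref{thm:theta_inclusion} nor any real-radicality hypothesis on $I$. On the other hand, the degree bounds in Schm\"udgen's theorem are famously non-effective, which is why the resulting statement is only the asymptotic equality $\bigcap_k \TH_k(I) = \cl(\conv(\V_\RR(I)))$ and yields no quantitative rate; finite-step convergence, where more can be said, is the subject of the subsequent results in Section~4.
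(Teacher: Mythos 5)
Your proposal is correct and takes essentially the same route as the paper: both arguments separate a point outside $\cl(\conv(\V_\RR(I)))$ by a linear functional strictly positive on the compact variety, then invoke Schm\"udgen's Positivstellensatz on $\V_\RR(I)$ viewed as $\{\pm f_i \geq 0\}$ to get a preordering certificate that reduces mod $I$ to a bounded-degree sos. You simply spell out the preordering bookkeeping (all terms with nonempty index set fall into $I$) that the paper leaves implicit.
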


\begin{proof}
Let $f_1,\ldots,f_k$ be a set of generators for $I$. 
We can think of $\V_{\RR}(I)$ as the compact semialgebraic set 
$S=\{\xx \in \RR^n: \pm f_i(\xx) \geq 0\}$. Then Schm{\"u}dgen's Positivstellensatz \cite{SCHM} applied to $S$, 
guarantees that any linear polynomial $l(\xx)$ that is strictly positive on $\V_{\RR}(I)$ has a sos representation 
modulo $I$. Hence, $l(\xx) \geq 0$ is valid for $\TH_k(I)$ for some $k$.
\end{proof}

In general, however, we are interested in finite convergence more than in asymptotic convergence, 
since in that case the theta bodies can give us
a representation of the closure of the convex hull of the variety as 
the projection of a spectrahedron, an important theoretical problem on its own. 
We will say that an ideal $I$ is {\bf $\TH_k$-exact} if 
$\TH_k(I)=\cl(\conv(\V_\RR(I)))$ and {\bf $\TH$-exact} if it is $\TH_k$-exact for some $k$. 
We first note that to study $\TH$-exactness of an ideal depends only on the real radical of the ideal. 

\begin{theorem}\label{thm:real_radical_exactness}
An ideal $I \subseteq \RR[\xx]$ is $\TH$-exact if and only if $\sqrt[\RR]{I}$ is $\TH$-exact.
\end{theorem}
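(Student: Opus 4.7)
The plan is to reduce the result to two already-proved facts: (i) the Real Nullstellensatz, which gives $\V_\RR(I)=\V_\RR(\sqrt[\RR]{I})$ and hence $\cl(\conv(\V_\RR(I)))=\cl(\conv(\V_\RR(\sqrt[\RR]{I})))$, and (ii) Theorem~\ref{thm:theta_inclusion}, which provides the function $\varPsi:\NN\to\NN$ with $\TH_{\varPsi(k)}(I)\subseteq\TH_k(\sqrt[\RR]{I})$. The only other ingredient I need is the trivial monotonicity observation: since $I\subseteq\sqrt[\RR]{I}$, any polynomial that is $k$-sos mod $I$ is automatically $k$-sos mod $\sqrt[\RR]{I}$, which gives the reverse containment $\TH_k(\sqrt[\RR]{I})\subseteq\TH_k(I)$ for every $k$. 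Together with the universal sandwich $\cl(\conv(\V_\RR(I)))\subseteq\TH_k(J)$ for any ideal $J$ with $\V_\RR(J)=\V_\RR(I)$, these ingredients pin down both directions.

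For the ``only if'' direction, I would assume $\TH_k(I)=\cl(\conv(\V_\RR(I)))$ for some $k$ and chain
\[
\cl(\conv(\V_\RR(\sqrt[\RR]{I})))\;\subseteq\;\TH_k(\sqrt[\RR]{I})\;\subseteq\;\TH_k(I)\;=\;\cl(\conv(\V_\RR(I)))\;=\;\cl(\conv(\V_\RR(\sqrt[\RR]{I}))),
\]
forcing equality throughout, so $\sqrt[\RR]{I}$ is $\TH_k$-exact. The middle inclusion uses monotonicity of $k$-sos with respect to enlarging the ideal, and the final equality is the Real Nullstellensatz.

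For the ``if'' direction, I would assume $\TH_k(\sqrt[\RR]{I})=\cl(\conv(\V_\RR(\sqrt[\RR]{I})))$ for some $k$ and apply Theorem~\ref{thm:theta_inclusion} to obtain
\[
\cl(\conv(\V_\RR(I)))\;\subseteq\;\TH_{\varPsi(k)}(I)\;\subseteq\;\TH_k(\sqrt[\RR]{I})\;=\;\cl(\conv(\V_\RR(\sqrt[\RR]{I})))\;=\;\cl(\conv(\V_\RR(I))),
\]
so $I$ is $\TH_{\varPsi(k)}$-exact.

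There is no real obstacle here; the work was already done in Theorem~\ref{thm:theta_inclusion}. The only mild subtlety is conceptual: although $\varPsi$ may be enormous (as the authors note, the best bounds come from quantifier elimination), finiteness of a single exact level propagates in both directions, so the qualitative notion of $\TH$-exactness is intrinsic to $\sqrt[\RR]{I}$. If I wanted to state a sharper quantitative version, I would record the explicit relation ``$\sqrt[\RR]{I}$ is $\TH_k$-exact $\Rightarrow$ $I$ is $\TH_{\varPsi(k)}$-exact'' and the trivial converse ``$I$ is $\TH_k$-exact $\Rightarrow$ $\sqrt[\RR]{I}$ is $\TH_k$-exact.''
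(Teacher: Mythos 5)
Your proof is correct and follows exactly the same strategy as the paper: the ``if'' direction via Theorem~\ref{thm:theta_inclusion} and the ``only if'' direction from the containment $I\subseteq\sqrt[\RR]{I}$. You have simply written out in full the sandwich arguments that the paper leaves implicit in its two-line proof.
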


\begin{proof}
The ``if'' direction follows from Theorem~\ref{thm:theta_inclusion}, 
while the ``only if'' direction follows from the fact that 
$I \subseteq \sqrt[\RR]{I}$.
\end{proof}

An important case in which $\TH$-exactness holds is when $\V_\RR(I)$ is finite.

\begin{corollary} \label{cor:finite_real_variety}
If $I$ is an ideal such that $\V_{\RR}(I)$ is finite, then $I$ is $\TH$-exact. 
\end{corollary}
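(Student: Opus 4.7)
The plan is to first reduce to the case when the ideal is real radical, and then exploit the fact that the coordinate ring $\RR[\xx]/I$ is finite dimensional to build an sos certificate for every nonnegative linear polynomial via polynomial interpolation.

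First, by Theorem~\ref{thm:real_radical_exactness}, it suffices to prove the statement for $\sqrt[\RR]{I}$ in place of $I$, since $\V_\RR(I)=\V_\RR(\sqrt[\RR]{I})$. By the Real Nullstellensatz (Theorem~\ref{thm:real_nullstellensatz}), $\sqrt[\RR]{I}=\I(\V_\RR(I))=\I(S)$ where $S:=\V_\RR(I)$ is a finite set. So I can assume from now on that $I=\I(S)$ and I want to exhibit a single positive integer $k$ for which $\TH_k(I)=\conv(S)$ (note that $\conv(S)$ is already closed as $S$ is finite).

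Because $S$ is finite, evaluation at the points of $S$ identifies $\RR[\xx]/I$ with the finite-dimensional space $\RR^S$, and the dimension equals $|S|$. Fix any $\theta$-basis $\B$ of $\RR[\xx]/I$ and let $D$ be the maximum degree occurring among elements of this (finite) basis; in particular, every equivalence class in $\RR[\xx]/I$ has a representative of degree at most $D$. The key step is now this: given any linear polynomial $l\in\RR[\xx]_1$ with $l|_S\ge 0$, define a function $\varphi\colon S\to\RR$ by $\varphi(\ss):=\sqrt{l(\ss)}$. By Lagrange interpolation over the finite set $S$ there is a polynomial $p\in\RR[\xx]$ with $p(\ss)=\varphi(\ss)$ for every $\ss\in S$, and since $\RR[\xx]/I$ has a basis of degree at most $D$, I can choose such a $p$ with $\deg(p)\le D$. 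Then $p^2-l$ vanishes identically on $S$, so $p^2-l\in\I(S)=I$, i.e.\ $l\equiv p^2\ \mathrm{mod}\ I$. This exhibits $l$ as $D$-sos mod $I$, so the inequality $l\ge 0$ is valid on $\TH_D(I)$.

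Since $S$ is finite, $\conv(S)$ is a polytope and is therefore the intersection of finitely many closed half-spaces, each of the form $\{l\ge 0\}$ for some linear $l$ nonnegative on $S$. The previous paragraph shows that every such half-space contains $\TH_D(I)$, hence $\TH_D(I)\subseteq\conv(S)$. The reverse inclusion $\conv(S)\subseteq\TH_D(I)$ holds by the general nesting $\cl(\conv(\V_\RR(I)))\subseteq\TH_k(I)$ noted right after Definition~\ref{def:theta_body}. Therefore $\TH_D(\sqrt[\RR]{I})=\conv(S)$, so $\sqrt[\RR]{I}$ is $\TH$-exact, and applying Theorem~\ref{thm:real_radical_exactness} concludes the proof for $I$. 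The main obstacle is ensuring that the interpolating polynomial $p$ can be chosen with a degree bound $D$ that does not depend on $l$; this is handled purely by the finite-dimensionality of $\RR[\xx]/I$, which supplies a uniform degree cap on representatives of every residue class.
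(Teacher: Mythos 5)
Your proof is correct and follows essentially the same strategy as the paper: reduce to the real radical case via Theorem~\ref{thm:real_radical_exactness}, then exploit finiteness of $S$ to interpolate and obtain a uniform degree bound so that every valid linear inequality is $k$-sos mod $I$. The only cosmetic difference is that the paper constructs point-indicator interpolators $g_i$ (each $g_i\equiv g_i^2$ mod $I$) and writes $l\equiv\sum l(\pp_i)g_i$ as a nonnegative combination of sums of squares, whereas you interpolate $\sqrt{l}$ directly and exhibit $l$ as a single square mod $I$; both yield a uniform degree cap from the finite-dimensionality of $\RR[\xx]/I$.
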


\begin{proof}
By Theorem \ref{thm:real_radical_exactness} we can assume that $I$ is real radical. 
If $\V_{\RR}(I)= \{\pp_1,\ldots,\pp_m\}$, then we can construct
interpolators $g_1,\ldots,g_m \in \RR[\xx]$ such that 
$g_i(\pp_j) = 1$ if $j = i$ and $0$ otherwise. Assume that $k$ is the highest 
degree of a $g_i$. Since $g_i -g_i^2$ vanishes on $\V_{\RR}(I)$,  
$g_i-g_i^2 \in \sqrt[\RR]{I} = I$ and therefore, $g_i$ is $k$-sos mod $I$ for $i=1,\ldots,m$.

If $f(\xx) \geq 0$ on $\V_\RR(I)$, then check that $f(\xx) \equiv \sum_{i=1}^{m} f(p_i)g_i(\xx)$ mod $I$. Since 
the polynomial on the right hand side is a nonnegative combination of the $g_i$'s which are $k$-sos mod $I$, 
$f$ is $k$-sos mod $I$. In particular,  all the linear polynomials that are nonnegative on $\V_{\RR}(I)$ are $k$-sos 
mod $I$ and so, $\TH_k(I)= \conv(\V_{\RR}(I))$.
\end{proof}

Under the stronger assumption that $\V_{\CC}(I)$ is finite, 
$\TH$-exactness of $I$ follows from a result of 
Parrilo (see \cite{LaurentSosSurvey} for a proof).
The work in \cite{LaLauRos} implies that when $\V_{\RR}(I)$ is finite, 
the bodies $Q_{\B_k}(I)$ converges in a finite number of steps, 
a slightly weaker result than the one in Corollary~\ref{cor:finite_real_variety}. 
Finite varieties are of great importance in practice. For instance, 
they are precisely the feasible regions of the $0/1$ 
integer programs. 

To study $\TH$-exactness in more detail, a better characterization is needed. 

\begin{theorem}\label{thm:theta_sos}
A real radical ideal $I \subseteq \RR[\xx]$ is $\TH_k$-exact 
if and only if all linear polynomials $l$ that are non-negative in $\V_{\RR}(I)$
are $k$-sos modulo $I$.
\end{theorem}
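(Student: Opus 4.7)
The plan is to prove the two directions separately, leveraging the closedness of $\Sigma_k(I)$ for real radical $I$ (from \cite{PowSchei}) and the equivalence $\cl(Q_{\B_k}(I)) = \TH_k(I)$ established in Theorem~\ref{thm:lasserre_vs_theta}.

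For the ``if'' direction, assume every linear $l$ that is nonnegative on $\V_\RR(I)$ is $k$-sos modulo $I$. Since the inclusion $\cl(\conv(\V_\RR(I))) \subseteq \TH_k(I)$ always holds, it suffices to establish the reverse inclusion. I would write $\cl(\conv(\V_\RR(I)))$ as the intersection of all closed half-spaces containing $\V_\RR(I)$; each such half-space has the form $\{\pp : l(\pp) \geq 0\}$ for some $l \in \RR[\xx]_1$ nonnegative on $\V_\RR(I)$. By hypothesis each such $l$ is $k$-sos mod $I$, so by Definition~\ref{def:theta_body} the half-space $\{l \geq 0\}$ contains $\TH_k(I)$. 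Intersecting over all such $l$ yields $\TH_k(I) \subseteq \cl(\conv(\V_\RR(I)))$.

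For the ``only if'' direction, assume $\TH_k(I) = \cl(\conv(\V_\RR(I)))$, and let $l \in \RR[\xx]_1$ be nonnegative on $\V_\RR(I)$. Then $l \geq 0$ on $\cl(\conv(\V_\RR(I))) = \TH_k(I)$. Suppose for contradiction that $l + I \notin \Sigma_k(I)$. Since $I$ is real radical, $\Sigma_k(I)$ is a closed convex cone in $\RR[\xx]_{2k}/I$ by \cite[Prop.~2.6]{PowSchei}, so by a separation argument (exactly as in the proof of Theorem~\ref{thm:lasserre_vs_theta}) there exists $\yy \in \RR^{\B_{2k}}$ with $\hat{l}\,\yy < 0$ and $\hat{g}\,\yy \geq 0$ for every $g + I \in \Sigma_k(I)$. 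Self-duality of the cone of PSD matrices then gives $M_{\B_k}(\yy) \succeq 0$; applying this to $(l+r)^2 + I \in \Sigma_k(I)$ and letting $r \to \infty$ forces $y_0 > 0$, so after rescaling we may take $y_0 = 1$, which places $\pi_{\RR^n}(\yy) \in Q_{\B_k}(I) \subseteq \TH_k(I)$. But then $l(\pi_{\RR^n}(\yy)) = \hat{l}\,\yy < 0$, contradicting $l \geq 0$ on $\TH_k(I)$. Hence $l$ must be $k$-sos modulo $I$.

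The only subtle step is the separation argument in the reverse direction, but it is essentially a verbatim reuse of the machinery already developed in the proof of Theorem~\ref{thm:lasserre_vs_theta}; the key ingredient that the proof cannot proceed without is the closedness of $\Sigma_k(I)$, which is precisely where the real radical hypothesis is used.
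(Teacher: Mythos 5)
Your proof is correct and follows essentially the same route as the paper's: the ``if'' direction is an immediate unwinding of Definition~\ref{def:theta_body}, and the ``only if'' direction is the separation argument from the proof of Theorem~\ref{thm:lasserre_vs_theta}, which you replay rather than simply cite (the paper just remarks that the second half of that proof already shows any linear inequality valid on $\cl(Q_{\B_k}(I))$ is $k$-sos mod $I$).
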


\begin{proof}
The ``if'' direction follows from the definition of theta bodies. The ``only if'' direction 
is a consequence of the proof of Theorem \ref{thm:lasserre_vs_theta} which said that 
when $I$ is real radical, $\TH_t(I) = \textup{cl}(Q_{\B_t}(I))$ for all $t$.
Recall that in the second part of that proof we showed that any linear inequality valid for 
$\cl(Q_{\B_t}(I))$ was $t$-sos mod $I$. Therefore, if $\TH_k(I) = \textup{cl}(\conv(\V_\RR(I)))$, 
then being nonnegative over 
$\cl(Q_{\B_k}(I)) = \TH_k(I)$ is equivalent to being nonnegative over $\V_{\RR}(I)$ 
giving us the intended result.
\end{proof}

The last general result on exactness that we will present, taken 
from \cite{GouNet}, is a negative one. 
Given a point $\pp \in \V_{\RR}(I)$ we define the \emph{tangent space}
of $\pp$, $T_{\pp}(I)$, to be the affine space passing through 
$\pp$ and orthogonal to the vector space spanned by the gradients of all polynomials
that vanish on $\V_{\RR}(I)$. We say that a point 
$\pp$ in $\V_{\RR}(I)$ is \emph{convex-singular}, 
if it is on the boundary of $\conv(\V_{\RR}(I))$ and $T_{\pp}(I)$ is not 
tangent to  $\conv(\V_{\RR}(I))$ i.e., $T_\pp(I)$ intersects
the relative interior of $\conv(\V_{\RR}(I))$.

\begin{theorem}\label{thm:convex_singularity}
An ideal with a convex-singularity is not $\TH$-exact.
\end{theorem}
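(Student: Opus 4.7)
The plan is to argue by contradiction. Suppose $I$ has a convex-singular point $\pp \in \V_{\RR}(I)$ and yet $I$ is $\TH_k$-exact for some $k$. By Theorem~\ref{thm:real_radical_exactness}, we may replace $I$ by $\sqrt[\RR]{I}$, since convex-singularity is defined in terms of $\V_{\RR}(I)$ and $\I(\V_{\RR}(I)) = \sqrt[\RR]{I}$ (by the Real Nullstellensatz), and $\TH$-exactness is preserved. So assume $I$ is real radical and $\TH_k$-exact.

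Since $\pp$ lies on the boundary of $\conv(\V_{\RR}(I))$, there is a supporting hyperplane through $\pp$, given by a linear polynomial $l \in \RR[\xx]_1$ with $l \geq 0$ on $\conv(\V_{\RR}(I))$ and $l(\pp) = 0$. By Theorem~\ref{thm:theta_sos}, since $I$ is real radical and $\TH_k$-exact, $l$ must be $k$-sos modulo $I$. So there exist $g_1,\dots,g_r \in \RR[\xx]_k$ and $h \in I$ with
\[
l = \sum_{i=1}^{r} g_i^2 + h.
\]

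Evaluating at $\pp \in \V_{\RR}(I)$ gives $0 = l(\pp) = \sum g_i(\pp)^2$, so each $g_i(\pp)=0$. Now take gradients at $\pp$: since $\nabla(g_i^2)(\pp) = 2g_i(\pp)\nabla g_i(\pp) = 0$, we obtain $\nabla l = \nabla h(\pp)$. But $h \in I = \I(\V_{\RR}(I))$, so $\nabla h(\pp)$ lies in the span of gradients at $\pp$ of polynomials vanishing on $\V_{\RR}(I)$, which is by definition the orthogonal complement of the direction space of $T_\pp(I)$. Thus $T_\pp(I)$ is orthogonal to $\nabla l$ and passes through $\pp$, which forces $T_\pp(I) \subseteq \{\xx : l(\xx) = l(\pp)\} = \{l = 0\}$.

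Since $\{l = 0\}$ is a supporting hyperplane of $\conv(\V_{\RR}(I))$, it is disjoint from the relative interior of $\conv(\V_{\RR}(I))$, and hence so is $T_\pp(I)$. This contradicts the convex-singularity of $\pp$, completing the proof. The only substantive step is the gradient computation linking the sos certificate to the tangent space; everything else is bookkeeping, and the reduction to the real radical case is what lets us invoke Theorem~\ref{thm:theta_sos} cleanly.
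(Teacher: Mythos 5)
Your proof is correct and follows essentially the same route as the paper's: reduce to the real radical case via Theorem~\ref{thm:real_radical_exactness}, obtain an sos certificate for a supporting linear polynomial $l$ vanishing at $\pp$ via Theorem~\ref{thm:theta_sos}, and then use the vanishing of $\nabla(\sum g_i^2)$ at $\pp$ to conclude $\nabla l = \nabla h(\pp)$, forcing $T_\pp(I) \subseteq \{l=0\}$ and contradicting convex-singularity. The paper phrases the final step by picking a point $\qq$ in $T_\pp(I)$ that lies in the relative interior and computing $l(\qq)=0$ directly, but this is the same argument in slightly different bookkeeping.
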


\begin{proof}
Let $I$ be an ideal and $\pp$ a convex-singular
point of $\V_{\RR}(I)$, and $J=\I(\V_{\RR}(I))$.
By Theorem~\ref{thm:real_radical_exactness}, 
it is enough to show that $J$ is not $\TH$-exact. 
Let $l(\xx)$ be a linear polynomial that is positive on the relative 
interior of $\conv(\V_{\RR}(I))$ and zero at $\pp$, which we can 
always find since $\pp$ is on the boundary of $\conv(\V_{\RR}(I))$. 
If $J$ was $\TH_k$-exact for some $k$, then since $J$ 
is real radical, by Theorem~\ref{thm:theta_sos} we would be able to write 
$l(\xx)=\sigma(\xx)+g(\xx)$ where $\sigma$ is a 
sum of squares and $g \in J$. Evaluating at $\pp$,
we see that $\sigma(\pp) = 0$,  which 
since $\sigma$ is a sum of squares, implies that $\nabla \sigma(\pp)=0$. 
Therefore, we must have $\nabla l = \nabla g (\pp)$. 
Let $\qq$ be a point in the relative interior of 
$\conv(\V_{\RR}(I))$ that is also in $T_{\pp}(I)$. Then by the definition of $T_\pp(I)$,
$$l(\qq)=(\qq-\pp)\nabla l = (\qq-\pp)\nabla g (\pp)=0$$
which contradicts our choice of $l$. Hence, $I$ is not $\TH_k$-exact.
\end{proof}

Even in the presence of convex-singular points, the theta bodies of the defining ideal can 
approximate the convex hull of the real variety arbitrarily well, but 
they will not converge in finitely many steps. An example of this is the ideal 
$I=\left<x^4-x^3+y^2\right>$. It has a compact real variety with
a convex-singularity at the origin. Hence we have asymptotic convergence of the theta bodies, 
but not finite convergence. The first theta body of this ideal is $\RR^2$ and in 
Fig.~\ref{cusp} we can
see that the next two theta bodies already closely approximate the convex hull of the variety.
\begin{figure}[ht]
\begin{center}
\hfill 
\includegraphics[scale=0.3]{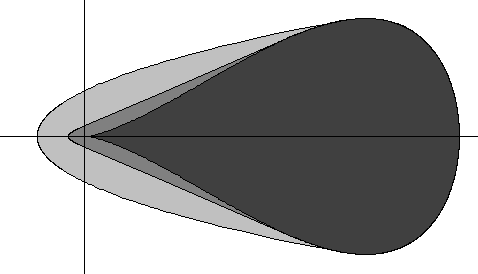}\hfill\
\caption{\small The second and third theta bodies of $I=\left<x^4-x^3+y^2\right>$.} \label{cusp}
\end{center}
\end{figure}

In the remainder of this section we focus 
on a more restricted exactness question. 
Problem 8.3 in \cite{Lovasz} motivates the question of which ideals in $\RR[\xx]$ 
are $\TH_1$-exact. Of particular interest are vanishing ideals of finite sets of points in $\RR^n$ and 
ideals arising in combinatorial optimization.

\begin{theorem}\label{thm:exact_points}
Let $S$ be a finite set of points in $\RR^n$, then 
$I=\I(S)$ is $\TH_1$-exact if and only if for each facet $F$ of the polytope $\conv(S)$
there exists a hyperplane $H$ parallel to $F$ such that $S \subseteq F \cup H$.
\end{theorem}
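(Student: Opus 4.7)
The plan is to apply Theorem~\ref{thm:theta_sos}. Since $S$ is finite, the Real Nullstellensatz gives $I = \I(S) = \sqrt[\RR]{I}$, so $\TH_1$-exactness of $I$ is equivalent to the assertion that every linear polynomial nonnegative on $S$ is $1$-sos modulo $I$. Assume for simplicity that $\conv(S)$ is full-dimensional in $\RR^n$; the lower-dimensional case is handled by noting that the linear polynomials vanishing on $\textup{aff}(\conv(S))$ already lie in $I$ and may be absorbed. By Farkas/LP-duality, the cone of linear polynomials nonnegative on $\conv(S)$ is generated by the constant $1$ together with the facet-defining inequalities of $\conv(S)$. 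Since $1 = 1^2$ is trivially $1$-sos and nonnegative real combinations of $1$-sos polynomials remain $1$-sos, the problem reduces to characterizing when each facet-defining inequality is $1$-sos modulo $I$.

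For the ``$\Leftarrow$'' direction, let $l(\xx) \geq 0$ be a facet-defining inequality of $\conv(S)$, so $F = \conv(S) \cap \{l=0\}$. By hypothesis there is a parallel hyperplane $H = \{l = c\}$ with $S \subseteq F \cup H$. Since $F$ is a proper face of the full-dimensional $\conv(S)$, at least one point of $S$ lies outside $\{l=0\}$, so $c \neq 0$; and $l \geq 0$ on $S$ forces $c > 0$. The polynomial $l(l-c)$ vanishes on $S \subseteq \{l=0\}\cup\{l=c\}$, hence lies in $I = \I(S)$, giving $l \equiv c^{-1} l^2 \equiv (c^{-1/2} l)^2 \pmod{I}$, which exhibits $l$ as $1$-sos modulo $I$.

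For the ``$\Rightarrow$'' direction, suppose $I$ is $\TH_1$-exact and let $F$ be a facet of $\conv(S)$ with defining inequality $l(\xx) \geq 0$. By assumption, $l \equiv \sum_{i=1}^{r} g_i^2 \pmod{I}$ for some $g_i \in \RR[\xx]_1$. For any $\ss \in F \cap S$ we have $0 = l(\ss) = \sum g_i(\ss)^2$, so each $g_i$ vanishes on $F\cap S$. Because $F$ is an $(n-1)$-dimensional face, $F \cap S$ contains $n$ affinely independent points and hence affinely spans the entire hyperplane $\textup{aff}(F) = \{l=0\}$. Each linear $g_i$ therefore vanishes on this hyperplane, and the $\RR$-vector space of linear polynomials vanishing on $\{l=0\}$ is one-dimensional and spanned by $l$, so $g_i = \alpha_i l$ for scalars $\alpha_i$. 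Putting $\beta := \sum_i \alpha_i^2$ we obtain $\beta l^2 \equiv l \pmod{I}$, i.e., $l(\beta l - 1) \in I$, so $l(\ss) \in \{0, 1/\beta\}$ for every $\ss \in S$. Here $\beta > 0$, for $\beta = 0$ would give $l \in I$, contradicting that $F$ is a proper face. Since $S \cap \{l=0\} \subseteq \conv(S) \cap \{l=0\} = F$, the hyperplane $H := \{l = 1/\beta\}$ is parallel to $F$ and satisfies $S \subseteq F \cup H$.

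The main obstacle I expect is the algebraic step in the ``$\Rightarrow$'' direction: extracting, from a generic sum-of-squares representation $l \equiv \sum g_i^2$, the strong conclusion that each $g_i$ is a scalar multiple of $l$. This hinges entirely on the fact that $F \cap S$ affinely spans the carrier hyperplane of $F$, which is automatic because $F$ is a \emph{facet} (dimension $n-1$). The companion ingredients---the Farkas reduction to facet inequalities and the real-radical hypothesis letting us pass between ``vanishing on $S$'' and ``lying in $I$''---are standard but indispensable framing steps.
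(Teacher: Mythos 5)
Your proof is correct and follows essentially the same route as the paper's: reduce via Theorem~\ref{thm:theta_sos} and Farkas to facet inequalities, observe that the $g_i$ in a $1$-sos certificate for a facet inequality $l$ must vanish on the carrier hyperplane of the facet (since $F\cap S$ affinely spans it) and hence are scalar multiples of $l$, yielding $l\equiv\beta l^2 \pmod I$ and the two-hyperplane structure. Your version is slightly more careful than the paper's in explicitly ruling out $\beta=0$ and in noting why $F\cap S$ affinely spans the hyperplane, but these are cosmetic, not substantive, differences.
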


\begin{proof}
We can assume without loss of generality that $\conv(S)$ is 
full-dimensional, otherwise we could restrict ourselves to its affine hull.
Assume $I$ is $\TH_1$-exact and let $F$ be a facet of $\conv(S)$. 
We can find a linear polynomial $l$ such that $l(\pp)=0$ for all $\pp \in F$
and $l(\pp)\geq 0$ for all $\pp \in S$. By Theorem~\ref{thm:theta_sos},
$l$ must be $1$-sos modulo $I$ which implies 
$l\equiv \sum_{i=1}^r g_i^2 \mod I$ for some linear polynomials $g_i$. 
In particular, $l(\pp)= \sum_{i=1}^r (g_i(\pp))^2$ for all $\pp \in S$, 
and since $l$ vanishes in all points of $S \cap F$, so must the $g_i$. 
This implies that all the $g_i$'s vanish on the 
hyperplane $\{\xx \in \RR^n: l(\xx)=0\}$, since they are linear. This is 
equivalent to saying that every $g_i$ is a scalar multiple of $l$,
 and therefore, $l\equiv\lambda l^2 \mod I$
for some nonnegative $\lambda$. Then $l-\lambda l^2 \in I$ implies that 
$S \subseteq \{\xx:l(\xx)-\lambda l(\xx)^2=0\}$ which is the 
union of the hyperplanes $\{\xx:l(\xx)=0\}$ and $\{\xx:l(\xx)=\sqrt{1/\lambda}\}$. 
So take $H$ to be the second hyperplane.

Suppose now for each facet $F$ of $\conv(S)$ there 
exists a hyperplane $H$ parallel to $F$ such that $S \subseteq F \cup H$.
This implies that for any facet $F$ and a fixed linear inequality $l_F(\xx)\geq 0$ that is
valid on $S$ and holds at equality on $F$, $l_F$ attains the same 
nonzero value at all points in $S \setminus F$. 
By scaling we can assume that value to be one,
which implies that $l_F(1-l_F)$ is zero at all points in $S$, and hence,
$l_F(1-l_F) \in I$. But then $l_F = l_F^2 + l_F(1-l_F)$ is $1$-sos mod $I$.
By Farkas Lemma, any valid linear inequality for a 
polytope can be written as a nonnegative linear combination of the facet inequalities of the polytope.
Therefore, any linear polynomial $l$ that is nonnegative over $S$ is 
a positive sum of $1$-sos polynomials mod $I$,  and hence, $l$ is $1$-sos mod $I$. 
Now using Theorem \ref{thm:theta_sos}, $I$ is $\TH_1$-exact.
\end{proof}

We call a polytope $P$ a $k$-\emph{level} polytope if for any facet 
$F$ of $P$ and any supporting hyperplane $H$ of $F$, there are $k-1$ hyperplanes
$H_1,...,H_{k-1}$ all parallel to $H$ such that the vertices of $P$ 
are contains in $H \cup H_1 \cup \dots \cup H_{k-1}$. 
Theorem \ref{thm:exact_points} states that $\I(S)$ is $\TH_1$-exact 
if and only if $S$ is the set of vertices of a $2$-level polytope. Polytopes with integer 
vertices that are $2$-level are called \emph{compressed polytopes} 
in the literature \cite{StanleyCompressed, Sullivant}. 

\begin{figure}[ht]
\begin{center}
\hfill 
\includegraphics[scale=0.2]{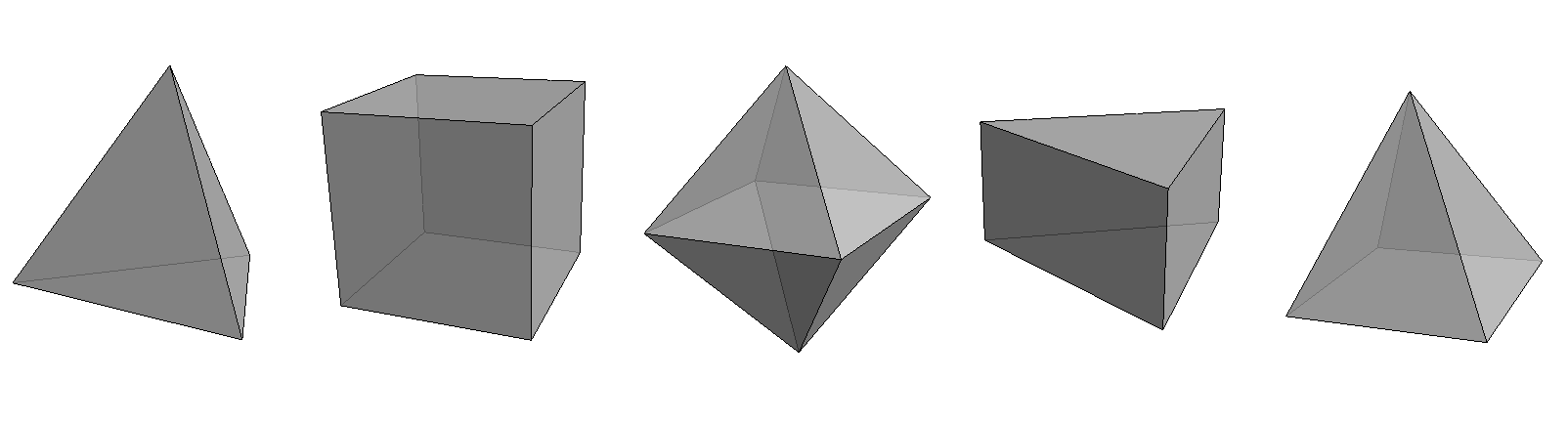}\hfill\
\caption{\small The five $2$-level polytopes in $\RR^3$ up to affine transformations.} \label{Fig:good_poly}
\end{center}
\end{figure}

Examples of $2$-level polytopes include simplices, cubes and cross-polytopes. 
In $\RR^3$, up to affine transformations, there are only five different 
$2$-level polytopes  and they are shown in Fig.~\ref{Fig:good_poly}. 
An example of a polytope that is not $2$-level is the truncated cube shown
in Figure \ref{Fig:bad_poly}. Three parallel translates of the hyperplane spanned by the slanted face 
are needed to contain all vertices of the polytope, and hence this is a $3$-level polytope.
Combinatorial properties of $2$-level polytopes can be found in \cite{GPT}.

\begin{figure}[ht]
\begin{center}
\hfill 
\includegraphics[scale=0.2]{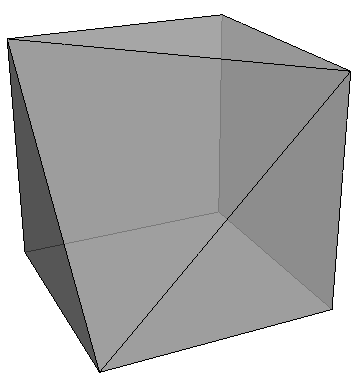}\hfill\
\caption{\small This polytope is $3$-level but not $2$-level.} \label{Fig:bad_poly}
\end{center}
\end{figure}

The result in Theorem \ref{thm:exact_points} can be generalized to a (weak) sufficient criterion for $\TH_k$-exactness.

\begin{theorem} \label{thm:level_polytope}
If $S \subseteq \RR^n$ is the set of vertices of a $(k+1)$-level polytope then $\I(S)$ is $\TH_k$-exact.
\end{theorem}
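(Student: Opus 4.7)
The plan is to apply Theorem~\ref{thm:theta_sos}, which is available because $\I(S)$ is real radical (it is the vanishing ideal of a real variety, namely $S$ itself). By Farkas' lemma, any linear polynomial $l$ nonnegative on $\conv(S)$ is a nonnegative combination of the facet-defining inequalities of $\conv(S)$ plus a nonnegative constant, so it suffices to show that each facet polynomial $l_F$ is $k$-sos modulo $\I(S)$. This reduces the task to a single univariate approximation problem per facet, pulled back along the linear form $l_F$.

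Fix a facet $F$ of $P := \conv(S)$ and normalize $l_F \in \RR[\xx]_1$ so that $l_F$ vanishes on $F$ and $l_F \geq 0$ on $S$. Since $P$ is $(k+1)$-level, the vertices of $P$ lie on at most $k+1$ hyperplanes parallel to $F$, so $l_F$ takes at most $k+1$ distinct values on $S$, say $0 = a_0 < a_1 < \cdots < a_m$ with $m \leq k$. The key reduction is: if we can produce a univariate polynomial $s(t) \in \RR[t]$ that (i) satisfies $s(a_i) = a_i$ for $i = 0, \dots, m$, and (ii) is a sum of squares of polynomials in $t$ of degree at most $k$, then $s(l_F(\xx)) - l_F(\xx)$ vanishes on $S$, hence lies in $\I(S)$, and $s(l_F(\xx))$ is a sum of squares of polynomials in $\xx$ of degree at most $k$ (since $l_F$ is linear). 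This exhibits $l_F$ as $k$-sos modulo $\I(S)$.

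To construct $s$, use the squared-Lagrange interpolant at the nodes $a_0, \dots, a_m$: letting
$$L_i(t) = \prod_{j \neq i} \frac{t - a_j}{a_i - a_j} \in \RR[t]$$
be the standard Lagrange basis, which has $\deg L_i = m \leq k$ and satisfies $L_i(a_j) = \delta_{ij}$, set
$$s(t) := \sum_{i=0}^{m} a_i L_i(t)^2 = \sum_{i=1}^{m} \bigl(\sqrt{a_i}\, L_i(t)\bigr)^2.$$
The $i = 0$ summand vanishes because $a_0 = 0$, and the remaining $a_i$ are strictly positive, so $\sqrt{a_i} L_i(t)$ is a genuine real polynomial of degree at most $k$; thus $s$ is a sum of squares of polynomials of degree at most $k$. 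Evaluating at $a_j$ gives $s(a_j) = a_j L_j(a_j)^2 = a_j$, so condition (i) is met.

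Combining these facts, each $l_F$ is $k$-sos modulo $\I(S)$, hence so is every nonnegative linear combination of facet polynomials and constants, which by Farkas covers every linear polynomial nonnegative on $S$. Theorem~\ref{thm:theta_sos} then yields $\TH_k(\I(S)) = \conv(S)$. The only real content is the squared-Lagrange construction; the main potential obstacle is controlling degrees so that the sos decomposition of $s(l_F)$ stays within $\RR[\xx]_k$, and this is exactly why one squares the degree-$m$ Lagrange basis rather than using a more naive interpolant.
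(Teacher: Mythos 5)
Your proof is correct and follows essentially the same route as the paper: reduce via Theorem~\ref{thm:theta_sos} and Farkas' lemma to showing each facet polynomial $l_F$ is $k$-sos modulo $\I(S)$, then use univariate Lagrange interpolation at the at most $k+1$ levels of $l_F$ on $S$. The paper constructs a single degree-$\leq k$ interpolator $g$ with $g(0)=0$ and $g(a_i)=\sqrt{a_i}$ and writes $l_F \equiv g(l_F)^2$ as one square, whereas you use the squared-Lagrange sum $\sum_{i\geq 1}\bigl(\sqrt{a_i}\,L_i(l_F)\bigr)^2$; this is a cosmetic variant of the same idea.
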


\begin{proof}
As before, we may assume that $\conv(S)$ is full-dimensional, 
and as observed in the proof of Theorem \ref{thm:exact_points},
it is enough to prove that for every facet $F$ of $\conv(S)$, if $h_F$ 
is a linear polynomial that is zero on $F$ and non-negative on $S$
then $h_F$ is $k$-sos modulo $\I(S)$. We can also assume that $F$ 
is contained in the hyperplane $\{\xx \in \RR^n:x_1=0\}$, and that
all points in $S$ have nonnegative first coordinate, as all the 
properties we are interested in are invariant under translations.

Then, by scaling, we may assume $h_F(\xx) = x_1$, and the 
$(k+1)$-level property tells us that there exist $k$ positive values $a_1,\dots,a_k$ 
such that all points in $S$ have the first coordinate in the set 
$\{0,a_1,\dots,a_k\}$. Then we can construct a one variable 
Lagrange interpolator $g$ of degree $k$ such that $g(0)=0$ and $g(a_i)=\sqrt{a_i}$  for $i=1,\dots,k$. 
This will imply that $h_F(\xx)\equiv g(x_1)^2$ modulo $\I(S)$ and 
so $h_F$ is $k$-sos modulo $\I(S)$.
\end{proof}

This sufficient condition is very restrictive in general, and in fact can be arbitrarily bad.

\begin{example}
We saw in Corollary~\ref{cor:odd_cycle_theta2} that 
if $G$ is a $(2k+1)$-cycle then the ideal $I_G$ is $\TH_2$-exact. 
However notice that we need $k+1$ parallel translates of the 
hyperplane $k = \sum x_i$ to cover all the vertices of $\textup{STAB}(G)$, since the incidence vectors of 
the stable sets in $G$ can 
have anywhere from $0$ to $k$ entries equal to one.
Theorem~\ref{thm:level_polytope} would only guarantee that $I_G$ is $\TH_k$-exact.
\end{example}

When $\V_\RR(I)$ is not finite, $\TH_1$-exactness becomes harder to guarantee. 
A useful result in this direction is an alternative
characterization of the first theta body of any ideal $I$. 
Given any ideal $I$, if we take all convex quadratic polynomials in $I$, 
and intersect the convex hulls of their 
zero sets, we obtain $\TH_1(I)$ exactly. 
This result, proved in \cite[Theorem 5.4]{GPT}, can be used in some simple cases
to both prove or disprove $\TH_1$-exactness.

Many other questions on convergence remain open. For example, 
there is no known example of a smooth variety whose theta body hierarchy does not 
converge finitely, but there is also no reason to believe that such an example does not exist. 
In all studied examples of smooth hypersurfaces, 
convergence happens at the first non-trivial theta body. 
Either an explanation for this behavior, or a few examples of
badly behaved smooth hypersurfaces, would be an important first step toward a better 
understanding of the theta body hierarchy for smooth varieties. 
Non-compact varieties have also not received much attention.

\section{More Examples and Applications}

\subsection{The maximum cut problem}

In section \ref{sec:stableset} we computed the theta body hierarchy 
for the maximum stable set problem. Another important problem in 
combinatorial optimization is the maximum cut (maxcut) problem. 
Given a graph $G=([n],E)$, we say that $C \subseteq E$ is a {\bf cut} of $G$, 
if there exists some partition of the vertices into two sets $V$ and $W$ 
such that $C$ is the set of edges between a vertex in $V$ and a vertex in $W$. 
The maxcut problem is the problem of finding a cut in $G$ of maximum cardinality. 
Theta bodies of the maxcut problem were studied in \cite{GLPT}, and in this 
subsection we will present some of those results.

The model we use is similar to the one used in the stable set problem. The characteristic vector of a cut $C$ of $G$, 
is the vector $\chi_C \in \{-1,1\}^{E}$ defined as $\chi_C(e)=-1$ if $e \in C$ and $1$ otherwise. Let $C_G$ be the 
collection of all characteristic vectors of cuts of $G$, and $\I(C_G)$ the vanishing ideal of $C_G$. 
The maximum cardinality cut
of $G$ can be found, in principle, by optimizing $\sum_{e \in E} x_e$ over $C_G$. However, this is a difficult problem 
and the size of the the maxcut can 
be approximated by optimizing $\sum_{e \in E} x_e$ over $\TH_k(\I(C_G))$. 

It is not hard to show that $\I(C_G)$ is generated by the polynomials 
$1-x_e^2$ for all $e\in E$, together with the polynomials
$1-\prod_{e\in K} x_e$ where $K \subseteq E$ is a chordless cycle. 
Using this one can construct a $\theta$-basis for $\I(C_G)$, but to do that
we need to introduce another combinatorial concept. 
Given an even set $T \subseteq [n]$, we call a subgraph 
$H$ of $G$ a $T$-join if the set of vertices
of $H$ with odd degree is precisely $T$. For example an 
$\emptyset$-join is a cycle, and the minimal $\{s,t\}$-join is the shortest path from 
$s$ to $t$. It is clear that there exists a $T$-join if and only if 
$T$ has an even number of nodes in each of the connected components of $G$. 
Define $\T_G :=\{T\subseteq [n] \,:\, \exists \,\, T-\textrm{join in } G\}$, and 
for each $T \in \T_G$, choose $H_T$ to be a $T$-join
with a minimal number of edges. Define 
$\T_k := \{T \subseteq \T_G : |H_T| \leq k\}$, and note that $\T_1=\{ \emptyset \} \cup E$. 
Then one can show that
$\B=\{\prod_{e \in T} x_e + \I(C_G) \,:\, T \in \T_G\}$ is a $\theta$-basis for $\I(C_G)$, and 
we can therefore identify $\B$ with $\T_G$ and each $\B_k$ with $\T_k$.

We can now give a description of the theta bodies of $\I(C_G)$.
$$\TH_k(\I(C_G)) = \left\{ \yy \in \RR^E \,:\,
\begin{array}{l} 
  \exists \, M \succeq 0, \, M \in \RR^{|\T_k| \times |\T_k|}
  \,\textup{such that} \\ 
  M_{e \emptyset} = y_e ,\,\,\forall\,\,e \in E\\
  M_{T T} = 1,\,\,\forall\,\,T \in \T_k\\
  M_{T_1 T_1} = M_{T_3 T_4} \,\,\textup{if} \,\, T_1 \Delta T_2 = T_3 \Delta T_4 
\end{array}
\right \}.$$ 
In particular, since $\T_1 = \{\emptyset\} \cup E$ we get 
$$\textup{TH}_1(IC_G) = \left\{ \yy \in \RR^E \,:\,
\begin{array}{l} 
\exists \, M \succeq 0, M \in \RR^{\{\emptyset\} \cup E \times \{\emptyset\} \cup E}\,\textup{such that} \\ 
M_{\emptyset \emptyset} = M_{e e} = 1,\,\,\forall\,\,e \in E\\
M_{\emptyset e} = M_{e \emptyset} =  y_e \,\,\forall\,\,e \in E\\
M_{e f} = y_g \,\,\textup{if} \,\, \{e,f,g\} \,\, \textup{is a triangle in} \,\,G \\
M_{e f} = M_{g h}  \,\,\textup{if} \,\, \{e,f,g,h\} \,\, \textup{is a square in} \,\,G
\end{array}
\right \}.$$
These relaxations are very closely related to some older 
hierarchies of relaxations for the maxcut problem. 
In particular they have a very interesting relation with the relaxation
introduced in \cite{LaurentMaxCut}. 
The theta body relaxation as it is, is not very strong, 
as shown by the following proposition.

\begin{proposition}[{\cite[Corollary 5.7]{GLPT}}]
Let $G$ be the $n$-cycle. Then the smallest $k$ for which $\I(C_G)$ is $\TH_k$-exact is $k =\lceil n/4 \rceil$.
\end{proposition}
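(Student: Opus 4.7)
The plan is to reduce the problem, via the symmetries of $\textup{CUT}(C_n)$ and a convenient change of variables, to a univariate sums-of-squares question in a single variable $s$, and then to handle the two directions by an explicit construction and a representation-theoretic dimension count.

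First, I note that the sign-change subgroup $\{\epsilon\in\{-1,1\}^n:\prod_i\epsilon_i=1\}$ preserves $C_G$ and acts transitively on the facets of $\textup{CUT}(C_n)$, which correspond to odd subsets $F\subseteq E$ via cycle inequalities $\sum_{e\in F}x_e-\sum_{e\notin F}x_e\geq-(n-2)$. By Theorem~\ref{thm:theta_sos}, $\TH_k$-exactness is thus equivalent to the $k$-sos representability of a single canonical facet polynomial, say $l(\xx)=(n-2)+x_1-x_2-\cdots-x_n$, modulo $\I(C_G)$. The sign flip $x_1\mapsto -x_1$ followed by the Boolean substitution $v_i=(1-x_i)/2$ transforms this into the following question: for what minimum $k$ is $f(\vv)=2\bigl(\sum_i v_i-1\bigr)$ a $k$-sos modulo $\I(V)$, where $V=\{\vv\in\{0,1\}^n:\sum_i v_i\text{ is odd}\}$? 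Setting $s:=\sum_i v_i$ and $S=\{1,3,\dots,M\}$ with $M\in\{n,n-1\}$ of matching parity, we have $|S|=N:=\lceil n/2\rceil$ and $\prod_{j\in S}(s-j)\in\I(V)$, so the problem is further equivalent to finding a nonnegative $p(s)\in\RR[s]$ of degree $\leq 2k$ with $p\equiv 2(s-1)$ modulo $\prod_{j\in S}(s-j)$.

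For the upper bound, I will explicitly construct such a $p$ of degree exactly $2\lceil n/4\rceil$. Taking $p(s)=(s-1)^2r(s)$ with $r\in\RR[s]$ of degree $2\lceil n/4\rceil-2$ interpolating the positive values $2/(s-1)$ at the $N-1$ points of $S\setminus\{1\}$, a short discriminant check confirms that $r$ can be made nonnegative on all of $\RR$ (uniquely when the interpolation determines $r$, or with a one-parameter family of freedom depending on $n\bmod 4$). Univariate Hilbert then writes $p=r_1^2+r_2^2$ with $r_i\in\RR[s]$ of degree $\leq\lceil n/4\rceil$, yielding an explicit sos decomposition $f\equiv r_1(s)^2+r_2(s)^2$ modulo $\I(V)$ by polynomials of degree at most $\lceil n/4\rceil$ in the $v_i$'s.

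For the lower bound, I will show that no smaller $k$ works. Averaging any hypothetical $k$-sos representation of the $S_n$-symmetric $f$ over the $S_n$-action produces an $S_n$-invariant representation; the isotypic decomposition of $\RR[\vv]_{\leq k}/\I(V)$ as an $S_n$-module then shows that the contribution of each isotypic summand is a nonnegative symmetric function of $s$, and summing them yields a nonnegative $P(s)\in\RR[s]_{\leq 2k}$ with $P\equiv 2(s-1)$ modulo $\prod_{j\in S}(s-j)$. A degree-parity count rules this out for $2k<2\lceil N/2\rceil=2\lceil n/4\rceil$: either $\deg P<N$, which forces $P=2(s-1)$ (not nonnegative on $\RR$), or the leading term of $P$ has odd degree, preventing nonnegativity at $\pm\infty$. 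The main obstacle will be the rigorous isotypic reduction in this last step---in particular, carefully accounting for contributions from non-trivial $S_n$-irreducibles (such as the standard representation, whose squared norms on orbits give terms proportional to $s(n-s)$ rather than pure polynomials in $s$) and ensuring they cannot lower the minimum $k$ below the bound predicted by the trivial component alone.
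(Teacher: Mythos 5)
The paper does not prove this proposition---it is cited from \cite[Corollary~5.7]{GLPT}---so I am judging your sketch on its own merits. Your overall strategy (sign-change symmetry to isolate one cycle facet, Boolean change of variables, reduction in the Hamming weight $s=\sum v_i$, explicit construction for the upper bound, symmetry averaging for the lower bound) is sound and consistent with how such results are proved. But there are two genuine gaps, one fixable and one substantive.

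For the lower bound, your claim that averaging the Gram matrix over $S_n$ yields a polynomial $P(s)$ that is \emph{nonnegative on all of $\RR$} is not justified and is in fact false in general. After averaging you get an $S_n$-invariant sum of squares $\sigma(\vv)$ of degree $\le 2k$ with $f\equiv\sigma$ mod $\I(V)$. On $\{0,1\}^n$ the multilinearization of $\sigma$ is $S_n$-invariant and hence a univariate polynomial $P(s)$ of degree $\le \min(2k,n)$, but from the non-trivial isotypic blocks you only get $P(s)\ge 0$ for $s\in\{0,1,\dots,n\}$---exactly the $s(n-s)$-type contributions you flagged are nonnegative on that range but not on $\RR$. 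So your argument about ``preventing nonnegativity at $\pm\infty$'' has no hypothesis to stand on. Fortunately, the fix is simpler than the isotypic analysis you were anticipating: you do not need nonnegativity on $\RR$ at all. Since $P(j)=2(j-1)$ for $j\in S$, if $\deg P<N$ then $P=2(s-1)$, and $P(0)=-2<0$ contradicts $P(0)\ge 0$. Hence $\deg P\ge N$, so $2k\ge N$, and since $2k$ is even, $k\ge\lceil N/2\rceil=\lceil n/4\rceil$. No parity argument about the leading coefficient of $P$ and no isotypic refinement beyond plain $S_n$-averaging is needed.

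The more serious gap is in the upper bound. You assert that the interpolant $r$ of degree $2\lceil n/4\rceil-2$ with $r(j)=2/(j-1)$ at the points of $S\setminus\{1\}$ ``can be made nonnegative on all of $\RR$'' after ``a short discriminant check.'' When $n\equiv 0,3\pmod 4$ the interpolant is \emph{uniquely determined} (no free parameter), and its degree grows like $n/2$, so a discriminant computation is not available; you need an actual argument that the unique Lagrange interpolant of $2/(s-1)$ at $\{3,5,\dots,M\}$ by a polynomial of degree $N-2$ is globally nonnegative. This is plausible (one can write the interpolant as $\frac{2}{s-1}\bigl(1-\prod_{j\in S\setminus\{1\}}\tfrac{j-s}{j-1}\bigr)$ and analyze signs interval by interval), but as it stands this step is the crux of the upper bound and is not proved. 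Without it you have only shown $k\ge\lceil n/4\rceil$, not that $\lceil n/4\rceil$ suffices.

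Two smaller points: you should note that the trivial facets $\pm 1\le x_e$ are $1$-sos modulo $\I(C_G)$ (via $1\mp x_e\equiv\tfrac12(1\mp x_e)^2$), so the cycle facets alone determine the threshold; and when you pass from a univariate sos $p=r_1^2+r_2^2$ back to $\I(V)$ you should explicitly observe that $f-\bigl(r_1(s)^2+r_2(s)^2\bigr)$ vanishes on $V$ and hence lies in $\I(V)$, with $\deg r_i(s(\vv))=\deg r_i\le k$ in the $v$-variables, and that the affine coordinate change $v_i=(1-x_i)/2$ preserves sos-degree so the conclusion transports back to $\I(C_G)$.
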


Convergence can be sped up for graphs with large cycles 
by taking a chordal closure of the graph, computing the theta 
body for this new graph, and then
projecting to the space of edges of the original graph. If instead of the chordal closure 
one takes the complete graph over the same vertices, we essentially recover
the hierarchy introduced in \cite{LaurentMaxCut}.

One can use the usual theta body hierarchy together with Theorem \ref{thm:exact_points} 
to solve a question posed by Lov{\'a}sz 
in \cite[Problem 8.4]{Lovasz}. Motivated by the fact that a graph is perfect 
if and only if the stable set ideal $I_G$
is $\TH_1$-exact, Lov{\'a}sz asked to characterize
the graphs that were \textquotedblleft cut-perfect\textquotedblright , 
i.e., graphs $G$ for which $\I(C_G)$ is $\TH_1$-exact. It turns out
that such a \textquotedblleft strong cut-perfect 
graph theorem\textquotedblright \ is not too hard to derive.

\begin{proposition}[{\cite[Corollary 4.12]{GLPT}}]
Given a graph $G$, the ideal $\I(C_G)$ is $\TH_1$-exact if and 
only if $G$ had no $K_5$ minor and no chordless circuit of length 
greater than or equal to five.
\end{proposition}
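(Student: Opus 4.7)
The key is Theorem~\ref{thm:exact_points}: since $C_G \subseteq \{-1,1\}^E$ is a finite set of points, $\I(C_G)$ is $\TH_1$-exact if and only if the cut polytope $\textup{CUT}(G) := \conv(C_G)$ is a $2$-level polytope, meaning every facet-defining linear functional takes exactly two values on the vectors $\chi_C$. The plan is therefore to characterize the graphs $G$ for which $\textup{CUT}(G)$ is $2$-level.

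For the ``if'' direction, assume $G$ has no $K_5$ minor and that every chordless cycle of $G$ has length at most $4$. By the Barahona--Mahjoub theorem, the facets of $\textup{CUT}(G)$ are exactly the trivial bounds $-1 \leq x_e \leq 1$ together with the odd-cycle inequalities $\sum_{e \in F} x_e - \sum_{e \in C \setminus F} x_e \geq -(|C|-2)$, indexed by chordless cycles $C$ of $G$ and odd $F \subseteq C$. The trivial bounds take only the values $\pm 1$ on cuts. For a chordless triangle or square, the parity constraint that a cut meets any cycle in an even number of edges, combined with a short case analysis, shows that each odd-cycle inequality attains exactly the two values $\pm(|C|-2)$ on $C_G$. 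Hence every facet of $\textup{CUT}(G)$ is $2$-level.

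For the ``only if'' direction, I exhibit a non-$2$-level facet in each forbidden case. If $G$ contains a chordless cycle $C$ of length $k \geq 5$, the corresponding odd-cycle inequality is still facet-defining, and since cuts of $G$ restrict to arbitrary cuts of $C$, it attains at least three distinct values on $C_G$; for $k = 5$, for instance, the values $5$, $1$, and $-3$ arise from cuts meeting $C$ in $0$, $2$, and $4$ edges respectively. If $G$ has $K_5$ as a minor, the pentagonal hypermetric inequality is a well-known facet of $\textup{CUT}(K_5)$ that takes at least three distinct values on cuts of $K_5$; I then lift it to a facet of $\textup{CUT}(G)$ through the standard correspondence between graph minor operations and the cut polytope, in which contracting an edge $e$ yields the face $\{x_e = 1\} \cap \textup{CUT}(G) \cong \textup{CUT}(G/e)$ and deleting an edge corresponds to coordinate projection.

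The main obstacle is the $K_5$-minor case. Because coordinate projections do not in general preserve $2$-levelness, one must either lift the pentagonal facet explicitly through the chain of contractions and deletions realizing the minor --- tracking coefficients to ensure both that it remains facet-defining for $\textup{CUT}(G)$ and that three distinct values on cuts are preserved --- or equivalently construct three cuts of $G$ whose images under the minor map realize the three values that the pentagonal inequality attains on $\textup{CUT}(K_5)$. By contrast, the chordless-cycle argument is an entirely explicit finite computation, in the same spirit as Proposition~\ref{prop:odd_cycle}.
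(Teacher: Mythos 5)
Your reduction via Theorem~\ref{thm:exact_points} to deciding when $\conv(C_G)$ is a $2$-level polytope is exactly what the surrounding text of this paper points to. Note, however, that this paper gives no proof --- it cites~\cite{GLPT}, where Corollary~4.12 is derived from a general classification of binary matroids with $2$-level cycle polytope, so the argument there runs at the level of matroid minors. Your route argues directly on cut polytopes via Barahona--Mahjoub, which is more elementary but leans on a deep polyhedral description. The ``if'' direction and the chordless-cycle half of ``only if'' are essentially sound, modulo small numerical slips: for a chordless triangle with $|F|=1$ the two attained values are $-1$ and $3$, not $\pm(|C|-2)=\pm1$; and on a $5$-cycle the value attained depends on whether the distinguished edge $e_1\in F$ is cut, not merely on the size of the cut. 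In both cases the count of distinct values, which is all you actually need, is unaffected.

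The genuine gap is the one you yourself flag: the $K_5$-minor half of ``only if''. Observing that $\conv(C_{G/e})$ is, up to dropping a constant coordinate, the face $\{x_e=1\}$ of $\conv(C_G)$ handles contractions, since faces of $2$-level polytopes are $2$-level; but edge deletion is coordinate projection, and, as you correctly note, projection does not preserve $2$-levelness of an arbitrary polytope. The missing ingredient is the \emph{zero-lifting lemma} for cut polytopes (Barahona--Mahjoub; see also Deza--Laurent, \emph{Geometry of Cuts and Metrics}, Ch.~26): if $G'$ is a spanning subgraph of $G$ and $a^T x \geq b$ is facet-defining for $\conv(C_{G'})$, then its extension by zeros to $\RR^{E(G)}$ is facet-defining for $\conv(C_G)$ and takes exactly the same set of values on $C_G$ as $a^T x$ does on $C_{G'}$. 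Combined with the face argument for contractions, this makes $2$-levelness of $\conv(C_G)$ a minor-closed property of $G$, and a $K_5$ minor then immediately yields a facet of $\conv(C_G)$ with at least three levels. As written, your proposal identifies the obstruction in the $K_5$ case but does not overcome it, so the forward implication is incomplete without this lemma or some equivalent argument.
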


\subsection{Permutation Groups}

In \cite[Sect. 4]{DHMO}, De Loera et al. study the behavior of the theta body hierarchy 
when applied to ideals associated to permutation groups.
Let $A \subseteq S_n$ be a subgroup of permutations and identify each 
element in $A$ with a permutation matrix.  We can then see these permutation
matrices as vectors in $\RR^{n \times n}$ and define $I_A$ to 
be their vanishing ideal. One of the main goals in \cite[Sect. 4]{DHMO} is to provide sufficient
conditions for the $\TH_1$-exactness of $I_A$.

A permutation group $A$ is \emph{permutation summable} 
if for all $P_1,\ldots,P_m \in A$ such that all entries of $\sum P_i - I$ are 
nonnegative, $\sum P_i - I$ is itself a sum of matrices in $A$. 
For example, $S_n$ itself is permutation summable as a direct consequence of
Birkhoff's Theorem. We say that $A$ is \emph{strongly fixed-point free} 
if the only permutation in $A$ that has fixed points is the identity.
In terms of matrices this is equivalent to saying that an element of $A$ 
is either the identity or has a zero diagonal.  This trivially
implies that all strongly fixed-point free groups are permutation summable. 

\begin{proposition}[{\cite[Theorem 4.5]{DHMO}}]
If $A\subseteq S_n$ is a permutation summable group, then $I_A$ is $\TH_1$-exact.
\end{proposition}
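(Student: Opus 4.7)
The plan is to apply Theorem~\ref{thm:exact_points}: since $A$ is a finite subset of $\{0,1\}^{n\times n}$ and $I_A=\I(A)$, $\TH_1$-exactness is equivalent to $\conv(A)$ being a $2$-level polytope, i.e., for every facet $F$ of $\conv(A)$ there should exist a hyperplane parallel to $F$ containing all vertices of $\conv(A)$ off $F$.

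The central claim I would prove is the identity $\conv(A)=B_n\cap\mathrm{aff}(A)$, where $B_n=\conv(S_n)$ is the Birkhoff polytope and $\mathrm{aff}(A)$ denotes the affine hull of $A$ in $\RR^{n\times n}$. Given this identity, every facet of $\conv(A)$ is cut out by the restriction of a facet $\{X_{ij}\ge 0\}$ of $B_n$ to $\mathrm{aff}(A)$. Since the coordinate $X_{ij}$ takes only the values $0$ and $1$ on permutation matrices, the vertices of $\conv(A)$ off such a facet all lie on the parallel hyperplane $\{X_{ij}=1\}$, which is exactly the $2$-level property.

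To establish the central claim I would first upgrade permutation summability to the following statement: if $S,S'\in\mathbb{N}[A]$ and $S\ge S'$ entrywise, then $S-S'\in\mathbb{N}[A]$. This is proved by induction on the number of summands used to express $S'$ as a sum of elements of $A$; the base case of subtracting a single $Q\in A$ reduces to the defining property by writing $S-Q=(SQ^{-1}-I)Q$, using that $SQ^{-1}\in\mathbb{N}[A]$ (since $A$ is closed under right multiplication by $Q^{-1}\in A$) and that $SQ^{-1}\ge I$ entrywise (right multiplication by $Q^{-1}$ carries the support of $Q$ to the diagonal, and $S\ge Q$ entrywise forces the diagonal of $SQ^{-1}$ to be at least $1$).

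Granting this strengthening, I would finish the central claim as follows. One inclusion is trivial; for the other, a density argument reduces us to rational $X\in B_n\cap \mathrm{aff}(A)$. Clearing denominators gives a nonnegative integer matrix $M=NX$ with all row and column sums equal to $N$, together with an integer expression $M=\sum_{Q\in A}\beta_Q Q$ coming from $X\in\mathrm{aff}(A)$. Choosing $K$ large enough that $\beta_Q+K\ge 0$ for every $Q\in A$, both $M+K\sum_{Q\in A}Q$ and $K\sum_{Q\in A}Q$ lie in $\mathbb{N}[A]$, and the former dominates the latter entrywise; the strengthened property then yields $M\in\mathbb{N}[A]$, so $X\in\conv(A)$. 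The main obstacle is precisely this rewriting step, where the abstract closure condition defining permutation summability is converted into the geometric identification of $\conv(A)$ with the slice of the Birkhoff polytope by the affine hull of $A$.
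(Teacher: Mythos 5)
The paper itself does not prove this proposition---it is cited verbatim from \cite[Theorem 4.5]{DHMO}---so there is no in-paper argument to compare against; what you have done is reconstruct a proof. Your reconstruction is correct. The reduction via Theorem~\ref{thm:exact_points} to showing that $\conv(A)$ is a $2$-level polytope is the right framing, and the central identity $\conv(A)=B_n\cap\mathrm{aff}(A)$ does the work: since $\conv(A)$ is then the set of points of $\mathrm{aff}(A)$ satisfying $X_{ij}\geq 0$ for all $(i,j)$, every facet lies in some coordinate hyperplane $\{X_{ij}=0\}$, and because a permutation matrix has $0/1$ entries, all of $A$ lies in $\{X_{ij}=0\}\cup\{X_{ij}=1\}$, giving $2$-levelness. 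The strengthening of permutation summability to closure of $\NN[A]$ under entrywise-dominated subtraction is handled correctly---the key computation $(SQ^{-1})_{ii}=S_{i,\sigma(i)}\geq Q_{i,\sigma(i)}=1$ justifies applying the defining property to $SQ^{-1}$, and group closure carries the conclusion back through right multiplication by $Q$. The clearing-of-denominators step is also sound: rationality of $X$ and of $\mathrm{aff}(A)$ gives a rational affine representation $X=\sum\alpha_Q Q$, and shifting by $K\sum_{Q\in A}Q$ puts both $NX+K\sum Q$ and $K\sum Q$ into $\NN[A]$ with the needed entrywise domination, after which the row-sum count forces $NX$ to be a nonnegative integer combination summing to $N$, hence $X\in\conv(A)$. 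The final density/closedness argument closes the gap from rational to arbitrary points of $B_n\cap\mathrm{aff}(A)$. This is a complete and correct proof of the cited result within the framework set up in Section~4 of the paper.
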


A very interesting class of permutation groups is the automorphism groups of graphs. 
Given a graph $G=([n],E)$, $\Aut(G) \subseteq S_n$ is defined
to be the group of vertex permutations $\sigma$ such that $\{i,j\} \in E$ 
implies $\{\sigma(i),\sigma(j)\} \in E$. The \emph{automorphism polytope} 
of $G$, denoted as $P_{\Aut(G)}$,
is the convex hull of $\Aut(G)$, where again we are identifying a 
permutation with its matrix representation. There has
been some study of ways to relax this polytope. See \cite{tinhofer} for instance.
One such relaxation 
is the polytope $P_G$ of all the points $P \in \RR^{n \times n}$ such that
$$PA_G = A_GP; \ \ \sum_{i=1}^n P_{i,j}=1, \ 1 \leq j \leq n;$$ 
$$\sum_{j=1}^n P_{i,j}=1, \ 1 \leq i \leq n; \ \ P_{i,j} \geq 0, \ 1 \leq i,j \leq n;$$
where $A_G$ is the $n \times n$ adjacency matrix of $G$. 
If in the constraints of $P_G$ we replace $P_{i,j} \geq 0$ by $P_{i,j}\in \{0,1\}$,
then we get precisely $\Aut(G)$. In fact, $P_{\Aut(G)}$ is the integer hull of $P_G$ i.e., the convex hull 
of the integer points of $P_G$. If $P_G=P_{\Aut(G)}$ then $G$ is said to be \emph{compact}. 
It is not hard to show that $\TH_1(I_{\Aut(G)}) \subseteq P_G$, so the first theta body 
provides an approximation of $P_{\Aut(G)}$ that is at least as good as 
the linear approximation $P_G$, but one can actually say more. 

\begin{proposition} \cite[Theorem 4.4]{DHMO}
The class of compact graphs is strictly included in the class of graphs with $\TH_1$-exact automorphism 
ideal. In particular, let $G_1,\ldots,G_m$ be $k$-regular graphs (all vertices have degree $k$) that are compact 
and $G$ their disjoint union. 
Then $I_{\Aut(G)}$ is $\TH_1$-exact, but $G$ is compact if and only if $G_1\cong \cdots\cong G_m$.
\end{proposition}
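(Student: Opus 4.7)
The proposition asserts two things: the strict inclusion of compact graphs in the class of graphs with $\TH_1$-exact automorphism ideal, and a characterization of compactness for a disjoint union of $k$-regular compact graphs. I would first prove both directions of the characterization, then establish $\TH_1$-exactness of $I_{\Aut(G)}$ under the hypotheses, and finally combine everything to deduce strictness.

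\textbf{The compactness equivalence.} Write $n_i$ for the number of vertices of $G_i$, so that $A_G$ is block-diagonal with blocks $A_{G_1},\dots,A_{G_m}$. For the direction ``$G$ compact $\Rightarrow$ all $G_i$ isomorphic'' I would argue by contrapositive: if $G_i\not\cong G_j$, use $k$-regularity to write $A_{G_i}J = kJ = JA_{G_j}$ for $J$ the all-ones $n_i\times n_j$ matrix, so a nonzero scalar multiple of $J$ placed in the $(i,j)$-block commutes with $A_G$. Balancing this against scalar multiples of all-ones on the $(i,i)$- and $(j,j)$-blocks (and leaving the remaining diagonal blocks equal to $I$, the remaining off-diagonal blocks zero), I get a doubly stochastic $P\in P_G$ whose $(i,j)$-block is nonzero. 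Since no automorphism of $G$ maps $G_i$ to $G_j$ when they are non-isomorphic, every permutation matrix in $\Aut(G)$ has zero $(i,j)$-block, and so does any convex combination; hence $P\notin P_{\Aut(G)}$. For the converse, assume all $G_i$ are isomorphic to a common graph $H$ with $n$ vertices, so that $\Aut(G) = \Aut(H)\wr S_m$. Given $P\in P_G$ decomposed into $n\times n$ blocks $(P_{ij})$, the commutation forces each $P_{ij}$ to commute with $A_H$, and the ``macroscopic'' matrix $Q_{ij}:=\frac{1}{n}\mathbf{1}^T P_{ij}\mathbf{1}$ is doubly stochastic on $[m]$. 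Apply Birkhoff to write $Q = \sum_\sigma \lambda_\sigma \Sigma_\sigma$. For each $\sigma$ with $\lambda_\sigma > 0$ and each $i$ with $Q_{i,\sigma(i)}>0$, the block $P_{i,\sigma(i)}/Q_{i,\sigma(i)}$ is doubly stochastic commuting with $A_H$, hence in $P_{\Aut(H)}$ by compactness of $H$, and so expands as a convex combination of $\Aut(H)$-permutation matrices. Re-assembling these fiberwise expansions along $\sigma$ yields a convex decomposition of $P$ over $\Aut(H)\wr S_m$, proving $G$ is compact.

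\textbf{$\TH_1$-exactness.} Grouping the components of $G$ by isomorphism type gives $\Aut(G) = \prod_t \Aut(H_t)\wr S_{m_t}$. I would show $\Aut(G)$ is permutation summable and invoke the cited Theorem 4.5. This reduces to proving closure of permutation summability under direct products (straightforward) and under wreath products with $S_m$; in the latter, given $\sum P_i - I\geq 0$ inside $\Aut(H_t)\wr S_{m_t}$, the macroscopic contraction on $[m_t]$ yields a matrix of the form $\sum \Sigma_i - I\geq 0$ on which Birkhoff's theorem (or, equivalently, permutation summability of $S_{m_t}$) gives a decomposition, and one then uses the permutation summability of the base $\Aut(H_t)$ (which follows from compactness of $H_t$ by a lifting analogous to the converse of the compactness equivalence above) to decompose on each fiber. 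Re-combining yields the required representation of $\sum P_i - I$ as a sum of elements of $\Aut(G)$.

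\textbf{Strict inclusion and main obstacle.} Any compact $G$ has $\TH_1$-exact automorphism ideal because $P_{\Aut(G)}\subseteq \TH_1(I_{\Aut(G)}) \subseteq P_G = P_{\Aut(G)}$. Strictness then follows by choosing two non-isomorphic $k$-regular compact graphs and forming their disjoint union: it is $\TH_1$-exact by the preceding step but not compact by the forward direction of the equivalence. The technical crux throughout is the lifting of a macroscopic Birkhoff decomposition back through the microscopic compactness of the base $H$ while respecting the wreath-product structure. The blocks $P_{ij}$ are not themselves doubly stochastic, only constrained by aggregate block sums, and the bookkeeping required to handle zero-weight permutations, normalization factors $Q_{i,\sigma(i)}$, and the interplay between different $\sigma$'s is the most delicate part of both the converse direction of the equivalence and the wreath-product closure step for permutation summability.
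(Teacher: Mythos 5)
The paper cites this result from [DHMO, Theorem~4.4] without reproducing a proof, so there is no in-text argument to compare against; I evaluate your proposal on its own terms. Your forward direction (non-isomorphic $G_i,G_j$ break compactness) is correct. The gap is in the converse. You claim that, after Birkhoff-decomposing the macroscopic matrix $Q$, the normalized block $P_{i,\sigma(i)}/Q_{i,\sigma(i)}$ is doubly stochastic --- and then, at the end of your proposal, you explicitly state the opposite: that ``the blocks $P_{ij}$ are not themselves doubly stochastic, only constrained by aggregate block sums.'' The second statement is the generic situation, and with it your reduction to compactness of $H$ collapses: a block of a doubly stochastic matrix has no reason to have constant row or column sums, so $P_{i,\sigma(i)}/Q_{i,\sigma(i)}$ need not lie in $P_H$. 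What rescues the argument is precisely the $k$-regularity, which you invoke only in the forward direction, together with connectivity of $H$. Since $P_{ij}A_H = A_H P_{ij}$ and $A_H \mathbf{1} = k\mathbf{1}$, the row-sum vector $P_{ij}\mathbf{1}$ is a $k$-eigenvector of $A_H$; for connected $H$ the $k$-eigenspace is $\RR\mathbf{1}$, so every row of $P_{ij}$ sums to the same value $Q_{ij}$, and dually every column does too. Only then is $P_{ij}/Q_{ij}$ doubly stochastic. This is the step your proposal is missing, and without it the converse is unproved; treating it as ``delicate bookkeeping'' understates that it simply fails without the regularity hypothesis.

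A secondary concern is your claim that compactness of $H$ yields permutation summability of $\Aut(H)$ ``by a lifting analogous to the converse of the compactness equivalence.'' This does not follow formally: compactness says $P_H = P_{\Aut(H)}$, so for $M = \sum_{i=1}^{\ell} P_i - I \geq 0$ one gets $\tfrac{1}{\ell-1}M \in P_{\Aut(H)}$, a real convex combination of automorphism matrices; but permutation summability asks for an \emph{integer} decomposition of $M$ into $\ell-1$ automorphism matrices, i.e.\ an integer decomposition (normality) property of the polytope $P_{\Aut(H)}$, which does not come for free from the convex-hull identity. That step needs its own Birkhoff--von Neumann--style elimination argument (again exploiting regularity and connectivity), not the same lifting. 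The remaining structure of your proposal --- closure of permutation summability under direct products and wreath products with $S_m$, invoking the cited Theorem~4.5, and deducing strictness from a disjoint union of two non-isomorphic $k$-regular compact graphs --- is sound in outline.
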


Finally note that while theta bodies of automorphism groups of graphs 
have interesting theoretical properties, computing with them tends to
be quite hard, as there is no easy general way to obtain a good 
$\theta$-basis for this problem. In fact, knowing if a graph has a non-trivial
automorphism, the \emph{graph automorphism problem}, is equivalent to knowing 
if a $\theta$-basis for $I_{\Aut(G)}$
has more than one element. Determining the complexity class of this problem is a 
major open question.

%%%%%%%%%%%%%%%%%%%%%%%%%%%%%%%%%%%%%%%%%%%%%%%%%%%%%%%%%%%%%%%%%%%%%%

\bibliographystyle{plain}

\printindex
\end{document}